\title{Restriction coefficients for partitions with at most three columns}
\author{Mitchell Lee}
\address{Harvard University\\
Dept. of mathematics\\
1 Oxford st.\\
Cambridge\\ MA 02138 (USA)}
\email{mitchell@math.harvard.edu}
\keywords{restriction coefficients, restriction problem, symmetric functions}
\subjclass{05E05}
\newcommand{\statement}[1]{%
  #1\enspace\ignorespaces
}
\newcommand{\K}{\mathbb{C}}
\newcommand{\Z}{\mathbb{Z}}
\newcommand{\N}{\mathbb{N}}
\newcommand{\Vect}{\text{Vect}}
\newcommand{\Par}{\mathrm{Par}}
\newcommand{\Fa}{\mathscr{F}}
\newcommand{\F}[1]{\mathscr{F}\{#1\}}
\newcommand{\Sym}{\mathfrak{S}}
\crefname{equation}{}{}
\Crefname{equation}{}{}
\DeclareMathOperator{\WC}{\mathcal{WC}}
\DeclareMathOperator{\B}{\mathcal{B}}
\DeclareMathOperator{\id}{id}
\def\multichoose#1#2{\ensuremath{\left(\kern-.3em\left(\genfrac{}{}{0pt}{}{#1}{#2}\right)\kern-.3em\right)}}
\theoremstyle{definition}
\newtheorem{defi}{Definition}[section]
\theoremstyle{plain}
\newtheorem{prop}[defi]{Proposition}
\newtheorem{theorem}[defi]{Theorem}
\newtheorem{lemma}[defi]{Lemma}
\newtheorem{coro}[defi]{Corollary}
\theoremstyle{remark}
\begin{document}

\begin{abstract}
    Let $r \geq 0$, and let $\lambda$ and $\mu$ be partitions such that $\lambda_1 \leq r + 1$. We present a combinatorial interpretation of the plethysm coefficient $\langle s_\lambda, s_\mu[s_r] \rangle$. As a consequence, we solve the \emph{restriction problem} for partitions with at most three columns. That is, for all partitions $\lambda$ with $\lambda_1 \leq 3$, we find a combinatorial interpretation for the multiplicities of the irreducible $\mathfrak{S}_n$-submodules of the Schur module $\mathbb{S}^\lambda \mathbb{C}^n$, considered as an $\mathfrak{S}_n$-module.
\end{abstract}
\maketitle
\section{Introduction}\label{sec:introduction}
For all partitions $\lambda$, let $\mathbb{S}^\lambda \colon \Vect_\K \to \Vect_\K$ denote the corresponding \emph{Schur functor}, where $\Vect_\K$ denotes the category of vector spaces over $\K$. Recall that for all positive integers $n$, we have that $\mathbb{S}^\lambda \K^n$ is a (finite-dimensional) polynomial $GL_n(\K)$-module. If $\lambda$ has at most $n$ parts, then $\mathbb{S}^\lambda \K^n$ is irreducible, and otherwise $\mathbb{S}^\lambda \K^n = 0$. All irreducible polynomial $GL_n(\K)$-modules arise in this way \cite[Section~6.1]{MR1153249}.

The composition of two Schur functors decomposes into a direct sum of Schur functors. That is, there exist nonnegative integers $p^\lambda_{\mu, \nu} \geq 0$, indexed by triples of partitions, such that for all $\mu, \nu$, there is a natural (in the vector space $V$) isomorphism
\[\mathbb{S}^\mu(\mathbb{S}^\nu V) = \bigoplus_\lambda (\mathbb{S}^\lambda V)^{\oplus p^\lambda_{\mu, \nu}}.\]

In what follows, we will use the notation $\langle s_\lambda, s_\mu[s_\nu] \rangle$ instead of $p^\lambda_{\mu, \nu}$. Here, $s_\lambda$, $s_\mu$, and $s_\nu$ denote Schur symmetric functions, $\langle \bullet, \bullet \rangle$ denotes the Hall inner product, and $f[g]$ denotes the \emph{plethysm} of the symmetric functions $f$ and $g$, which was defined by Littlewood in 1936 \cite{MR1573992}. The numbers $\langle s_\lambda, s_\mu[s_\nu] \rangle$ are called \emph{plethysm coefficients}. Plethysm coefficients have several known representation-theoretic interpretations, but finding a combinatorial interpretation of $\langle s_\lambda, s_\mu[s_\nu] \rangle$ remains a central open problem in algebraic combinatorics \cite{MR4780733,MR2765321}.

An important special case is that $\nu$ has only one row, so that $s_\nu = h_r$ is a complete homogeneous symmetric function for some $r \geq 0$. Even in this case, no combinatorial interpretation of $\langle s_\lambda, s_\mu[s_\nu] \rangle = \langle s_\lambda, s_\mu[h_r]\rangle$ is known. In this article, we present such a combinatorial interpretation in the case that $\lambda$ has at most $r + 1$ columns.

\begin{restatable}{theorem}{plethysm-hr}\label{thm:plethysm-hr}
Let $r \geq 0$, and let~$\lambda$ and~$\mu$ be partitions with $\lambda_1 \leq r + 1$.
\begin{enumerate}[label=(\alph*)]
\item If $r$ is even, $\ell(\mu) \leq r + 1$, and \[\lambda^T_i = |\mu| - \mu_{r + 2 - i}\] for $1 \leq i \leq r + 1$, then $\langle s_\lambda, s_\mu[h_r]\rangle = 1$.\label{item:plethysm-h-even}
\item If $r$ is odd, $\mu_1 \leq r + 1$, and \[\lambda^T_i = |\mu| - \mu_{r + 2 - i}^T\] for $1 \leq i \leq r + 1$, then $\langle s_\lambda, s_\mu[h_r]\rangle = 1$.\label{item:plethysm-h-odd}
\item Otherwise, $\langle s_\lambda, s_\mu[h_r]\rangle = 0$.
\end{enumerate}
\end{restatable}

An easy consequence is that if $\lambda$ has at most three columns, then we may in fact compute $\langle s_\lambda, s_\mu[h_r] \rangle$ for all $r$.

\begin{restatable}{coro}{plethysm}\label{cor:plethysm}
Let~$\lambda$ and $\mu$ be partitions with $\lambda_1 \leq 3$. For all $r \geq 4$, we have the following.

\begin{align}
\langle s_\lambda, s_\mu[h_0] \rangle &= \begin{cases} 1 & \mbox{if $\mu$ has at most one row and $\lambda = \emptyset$} \\ 0 & \mbox{otherwise} \end{cases}. \label{eq:h-zero}\\
\langle s_\lambda, s_\mu[h_1] \rangle &= \begin{cases} 1 & \mbox{if $\lambda = \mu$} \label{eq:h-one}\\ 0 & \mbox{otherwise} \end{cases}. \\
\langle s_\lambda, s_\mu[h_2] \rangle &= \begin{cases} 1 & \mbox{if $\ell(\mu) \leq 3$ and $\lambda = (\mu_1 + \mu_2, \mu_1 + \mu_3, \mu_2 + \mu_3)^T$} \\ 0 & \mbox{otherwise} \end{cases}. \label{eq:h-two} \\
\langle s_\lambda, s_\mu[h_3] \rangle &= \begin{cases} 1 & \mbox{if $\mu_1 \leq 1$ and $\lambda = (\mu_1^T, \mu_1^T, \mu_1^T)^T$} \\ 0 & \mbox{otherwise} \end{cases}. \label{eq:h-three} \\
\langle s_\lambda, s_\mu[h_r] \rangle &= \begin{cases} 1 & \mbox{if $\mu = \lambda = \emptyset$} \\ 0 & \mbox{otherwise} \end{cases}. \label{eq:h-four}
\end{align}
\end{restatable}

\Cref{cor:plethysm} allows us to solve a special case of the important \emph{restriction problem}, which we now review. Let~$\lambda$ be a partition. Because $\Sym_n$ is a subgroup of $GL_n(\K)$ via permutation matrices, we may restrict the Schur module $\mathbb{S}^\lambda \K^n$ to $\Sym_n$ and then write the result as a direct sum of irreducible $\Sym_n$-modules. In this way, we obtain, for each partition~$\mu$ with $|\mu| = n$, a positive integer \[r_{\lambda}^\mu = \dim \operatorname{Hom}_{\Sym_n}(V_\mu, \mathbb{S}^\lambda \K^n),\] where $V_\mu$ is the Specht module corresponding to the partition~$\mu$. The coefficients $r_\lambda^\mu$ are called \emph{restriction coefficients}.

In 1935, Littlewood proved that 
\begin{equation}\label{eq:littlewood}
r_{\lambda}^\mu = \langle s_\lambda, s_\mu[H] \rangle,
\end{equation} where $H = 1 + h_1 + h_2 + \cdots$ denotes the sum of all complete homogeneous symmetric functions \cite{MR1576896}. However, there remains no known combinatorial formula for $r_\lambda^\mu$. The problem of finding such a combinatorial formula is known as the \emph{restriction problem}.

The restriction problem continues to be a popular research topic and has been studied from many perspectives. Here is a small and far from comprehensive sampling of recent results. In 2021, Orellana and Zabrocki introduced the \emph{irreducible character basis} $\{\tilde{s}_\lambda\}_\lambda$ of the ring of symmetric functions and used it to provide an algorithm for computing $r_\lambda^\mu$ \cite{MR4295089}. In 2024, the author introduced an abelian group homomorphism $\Fa$ called the \emph{Frobenius transform} on the ring of symmetric functions and used it to prove several results about the vanishing of $r_\lambda^\mu$ \cite{MR4804579}. In 2024, Narayanan, Paul, Prasad, and Srivastava found a combinatorial interpretation of $r_\lambda^\mu$ in the case that~$\mu$ has one column and~$\lambda$ is either a hook shape or has at most two columns \cite{MR4804588}.

Using \cref{cor:plethysm}, we may solve the restriction problem in the case that~$\lambda$ has at most three columns. In this case, we present the following formula for $r_\lambda^\mu$. (Recall that $H = 1 + h_1 + h_2 + \cdots$.)
\begin{restatable}{theorem}{main}\label{thm:main}
Let~$\lambda$,~$\mu$ be partitions. If $\lambda_1 \leq 3$, then \[r_\lambda^\mu = \left\langle H \sum_{r, \nu} e_r s_{(\nu_1 + \nu_2 - \nu_3 - r)/2, (\nu_1 - \nu_2 + \nu_3 - r)/2, (-\nu_1 + \nu_2 + \nu_3 - r)/2} s_{\lambda / \nu^T}, s_\mu \right\rangle,\] where the sum is over all integers~$k$ and all partitions~$\nu$ such that $\nu^T \subseteq \lambda$ and $(-\nu_1 + \nu_2 + \nu_3 - r)/2$ is a nonnegative integer.
\end{restatable}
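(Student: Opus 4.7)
The plan is to apply Littlewood's formula $r_\lambda^\mu = \langle s_\lambda, s_\mu[H]\rangle$ from \eqref{eq:littlewood} and decompose $s_\mu[H]$ using the plethystic addition formula, so that every contribution is controlled by \cref{thm:plethysm}. Concretely, write $H = h_0 + h_1 + h_2 + h_3 + h_{\geq 4}$ (with $h_{\geq 4} = h_4 + h_5 + \cdots$) and iterate the skew Schur addition formula $s_{\mu/\rho}(X \cup Y) = \sum_{\rho \subseteq \tau \subseteq \mu} s_{\tau/\rho}(X)\, s_{\mu/\tau}(Y)$ plethystically to obtain
\[
s_\mu[H] \;=\; \sum_{\emptyset \subseteq \rho_0 \subseteq \rho_1 \subseteq \rho_2 \subseteq \rho_3 \subseteq \mu} s_{\rho_0}[h_0]\, s_{\rho_1/\rho_0}[h_1]\, s_{\rho_2/\rho_1}[h_2]\, s_{\rho_3/\rho_2}[h_3]\, s_{\mu/\rho_3}[h_{\geq 4}].
\]
Pair with $s_\lambda$ and expand each skew Schur in the Schur basis, then each product via iterated Littlewood--Richardson into partitions $\pi_0, \ldots, \pi_4$ weighted by $c^\lambda_{\pi_0\, \pi_1\, \pi_2\, \pi_3\, \pi_4}$. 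Because $\lambda_1 \leq 3$, every $\pi_i$ has at most three columns, and \cref{thm:plethysm} therefore applies factor by factor.

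Reading off the constraints case by case from \cref{thm:plethysm}: the $h_0$-factor forces $\rho_0 = (a)$ with $\pi_0 = \emptyset$ for some $a \geq 0$; the $h_1$-factor leaves $\pi_1$ free with weight $c^{\rho_1}_{(a), \pi_1}$; the $h_2$-factor forces $\pi_2 = (\sigma_1+\sigma_2,\, \sigma_1+\sigma_3,\, \sigma_2+\sigma_3)^T$ for some partition $\sigma$ with $\ell(\sigma) \leq 3$, weighted by $c^{\rho_2}_{\rho_1, \sigma}$; the $h_3$-factor forces $\rho_3/\rho_2 = (1^k)$ and $\pi_3 = (3^k)$ for some $k \geq 0$; and the $h_{\geq 4}$-factor forces $\rho_3 = \mu$ together with $\pi_4 = \emptyset$, by a further plethystic decomposition of $h_4 + h_5 + \cdots$ in which case~(5) of \cref{thm:plethysm} eliminates every nonempty contribution with at most three columns.

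The main technical step is a \emph{stacking identity}: for $\lambda_1 \leq 3$,
\[
c^\lambda_{\pi_1,\, \pi_2,\, (3^k)} \;=\; c^\lambda_{\pi_1,\, \nu^T},
\]
where $\nu^T$ is the partition obtained from $\pi_2$ by prepending $k$ rows of length $3$; equivalently, $\nu = \pi_2^T + (r, r, r)$ with $r = k$. This follows from the identity $(x_1 x_2 x_3)^k \cdot s_\eta(x_1, x_2, x_3) = s_{\eta + (k,k,k)}(x_1, x_2, x_3)$ in the ring of symmetric polynomials in three variables, transported via the $\omega$ involution to partitions with at most three columns. Under this substitution, $\sigma$ becomes exactly $((\nu_1+\nu_2-\nu_3-r)/2,\, (\nu_1-\nu_2+\nu_3-r)/2,\, (-\nu_1+\nu_2+\nu_3-r)/2)$, and the condition that $\sigma$ be a valid partition is equivalent to the nonnegative-integer requirement on $(-\nu_1+\nu_2+\nu_3-r)/2$ in the statement.

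To finish, collapse the LR coefficients by summing over $\pi_1$, $\rho_1$, $\rho_2$ using
\[
\sum_{\pi_1} c^\lambda_{\pi_1,\, \nu^T}\, c^\mu_{(a),\, \pi_1,\, \sigma,\, (1^r)} \;=\; \left\langle s_{(a)}\, e_r\, s_\sigma\, s_{\lambda/\nu^T},\; s_\mu\right\rangle,
\]
then sum over $a \geq 0$ via $\sum_a s_{(a)} = H$ to arrive at the stated formula (terms with $\nu^T \not\subseteq \lambda$ vanish automatically because $s_{\lambda/\nu^T} = 0$). The main obstacle is the stacking identity, together with the combinatorial bookkeeping required to consolidate the five-fold iterated LR expression into the compact form of \cref{thm:main}.
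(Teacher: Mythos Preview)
Your approach is correct and is essentially the Hall-dual of the paper's argument. The paper computes $s_\lambda[H^\perp]$ directly via the adjoint plethystic addition formula (\cref{prop:plethystic-addition-4}): since $D = \operatorname{span}\{s_\lambda : \lambda_1 \leq 3\}$ is a subcoalgebra, one applies $\bullet[h_0^\perp] \otimes \cdots \otimes \bullet[h_3^\perp]$ to $\Delta^{(3)}(s_\lambda)$, invokes \cref{thm:s-lambda-h-perp} (the adjoint form of \cref{thm:plethysm}) on each tensor factor, and multiplies. You instead expand $s_\mu[H]$ by the direct plethystic addition formula and pair with $s_\lambda$, which introduces the iterated Littlewood--Richardson sums over $\pi_i$ and $\rho_j$; each inner product $\langle s_{\pi_i}, s_\tau[h_r]\rangle$ is then evaluated by \cref{thm:plethysm}. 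These two computations are adjoint and lead to the same answer. One streamlining worth noting: the paper reorders the tensor factors to $h_0, h_3, h_2, h_1$ via cocommutativity (see \eqref{eq:frobenius-eq-mul-comul}), so that $\bullet[h_3^\perp]$ acts on a straight-shape Schur function $s_{\lambda^{(2)}}$, forcing $\lambda^{(2)} = (3^r)$; then $s_{\lambda^{(3)}/(3^r)}$ is again a straight-shape Schur function (removing $r$ full rows of length $3$ from the top of a partition with at most three columns leaves a straight shape), and \eqref{eq:s-h2-perp} applies immediately. This replaces your stacking identity $c^\lambda_{\pi_1,\pi_2,(3^k)} = c^\lambda_{\pi_1,\nu^T}$ with a trivial skew-to-straight identification, eliminating most of the LR bookkeeping you flag as the main obstacle.
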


Because of the Littlewood--Richardson rule, which allows us to expand the expression \[H e_r s_{(\nu_1 + \nu_2 - \nu_3 - r)/2, (\nu_1 - \nu_2 + \nu_3 - r)/2, (-\nu_1 + \nu_2 + \nu_3 - r)/2} s_{\lambda / \nu^T} \] as a nonnegative linear combination of Schur functions, the formula presented in \cref{thm:main} is combinatorial. We will explicitly describe a combinatorial interpretation of $r_\lambda^\mu$ in the case that $\lambda$ has at most three columns in \cref{sec:interpretation}.

\section{Preliminaries}
Unless otherwise noted, the material in this section can be found in any standard reference on the theory of symmetric functions \cite{MR1464693} \cite[Chapter~I]{MR3443860} \cite[Chapter~7]{MR4621625}.

\subsection{Symmetric functions}
Let~$\Lambda$ denote the ring of symmetric functions with integer coefficients in the infinitely many variables $x_1,x_2, x_3, \ldots$.

For a sequence $\lambda = (\lambda_1, \ldots, \lambda_\ell)$ of rational numbers, define the \emph{length} $\ell(\lambda) = \ell$ and the \emph{size} $|\lambda| = \lambda_1 + \cdots + \lambda_\ell$. Let $e_\lambda = e_{\lambda_1} \cdots e_{\lambda_\ell}$ and $h_\lambda = h_{\lambda_1} \cdots h_{\lambda_\ell}$ denote the corresponding elementary and homogeneous symmetric functions, respectively, where we use the convention that $e_m = h_m = 0$ for $m \in \mathbb{Q} \setminus \N$.

If $\lambda_1, \ldots, \lambda_\ell$ are integers and $\lambda_1 \geq \cdots \geq \lambda_\ell \geq 0$, let $s_\lambda$ denote the corresponding Schur symmetric function. When a sequence $(\lambda_1, \ldots, \lambda_\ell)$ appears as a subscript in this way, we will commonly omit the parentheses; for example, $e_{1, 3, 2, 1}$ is shorthand for $e_{(1, 3, 2, 1)} = e_3 e_2 e_1^2$.

We say that $\lambda$ is a \emph{partition} if $\lambda_1, \ldots, \lambda_\ell$ are integers and $\lambda_1 \geq \cdots \geq \lambda_\ell > 0$. Let $\Par $ denote the set of all partitions. It is well known that $(e_\lambda)_{\lambda \in \Par}$, $(h_\lambda)_{\lambda \in \Par}$, and $(s_\lambda)_{\lambda \in \Par}$ are all bases for~$\Lambda$ over~$\Z$. Let $\lambda^T$ denote the transpose (i.e. conjugate) of a partition $\lambda$.

Let $\langle \bullet, \bullet \rangle \colon \Lambda \times \Lambda \to \Z$ denote the Hall inner product of symmetric functions. Let $\omega \colon \Lambda \to \Lambda$ denote the linear map given by $\omega(e_\lambda) = h_\lambda$ for all partitions~$\lambda$. It is well known that $\omega$ is a ring homomorphism and an involution and that $\omega(s_\lambda) = s_{\lambda^T}$ for all partitions~$\lambda$.

For a variable $t$, let
\begin{alignat*}{4}
H(t) &= \sum_{n \geq 0} h_n t^n &&= \prod_{i \geq 1} \frac{1}{1 - x_i t} &&= \exp\left(\sum_{k \geq 1} \frac{p_k}{k}t^k\right)&&\in \Lambda \llbracket t \rrbracket \\
E(t) &= \sum_{n \geq 0} e_n t^n &&= \prod_{i \geq 1} (1 + x_i t) &&= \exp\left(\sum_{k \geq 1} \frac{p_k}{k} (-1)^{k-1} t^k\right) &&\in \Lambda \llbracket t \rrbracket.
\end{alignat*}
It is clear that $E(-t) = (H(t))^{-1}$.

Let $\overline{\Lambda}$ denote the ring of symmetric formal power series; i.e. formal sums $\sum_{n} f_n$, where $f_n \in \Lambda$ is homogeneous of degree $n$ for all $n$. Let $H = H(1) = 1 + h_1 + h_2 + \cdots \in \overline{\Lambda}$. In the remainder of this article, we will often silently generalize definitions and results from $\Lambda$ to $\overline{\Lambda}$ using the appropriate limit. For example, even though we have explicitly stated that the Hall inner product is a function $\langle \bullet, \bullet \rangle \colon \Lambda \times \Lambda \to \Z$, we may write an expression such as \[\langle 2 + 3 h_1 - h_4, H \rangle,\] which is understood to denote the limit \[\lim_{n \to \infty} \langle 2 + 3 h_1 - h_4, 1 + h_1 + h_2 + \cdots + h_n \rangle = 4.\]
\subsection{Littlewood--Richardson coefficients}
For all partitions $\lambda, \mu, \nu$, let \[c^{\lambda}_{\mu, \nu} = \langle s_\lambda, s_\mu s_\nu\rangle.\] The numbers $c^{\lambda}_{\mu, \nu}$ are known as \emph{Littlewood--Richardson coefficients}. The well known \emph{Littlewood--Richardson rule}, which we state below, is a combinatorial interpretation of $c^{\lambda}_{\mu, \nu}$.

\begin{defi}
Let $\lambda / \mu$ be a skew shape. A \emph{Littlewood--Richardson tableau} of shape $\lambda / \mu$ is a semistandard Young tableau~$T$ of shape $\lambda / \mu$ such that the concatenation of the reversed rows of~$T$ is a lattice word; i.e., every prefix has at least as many occurrences of~$i$ as occurrences of $i + 1$ for all positive integers~$i$.
\end{defi}
\begin{theorem}[Littlewood--Richardson Rule, {\cite[Chapter~5]{MR1464693}, \cite[Chapter~9]{MR3443860}, \cite[Appendix~1]{MR4621625}}]
Let $\lambda, \mu, \nu$ be partitions. Then $c^{\lambda}_{\mu, \nu}$ is the number of Littlewood--Richardson tableaux of shape $\lambda / \mu$ and content~$\nu$. (In particular, if $\mu \not \subseteq \lambda$, then $c^{\lambda}_{\mu, \nu} = 0$.) 
\end{theorem}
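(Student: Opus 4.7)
The plan is to reduce the statement to the expansion of the skew Schur function $s_{\lambda/\mu}$ into straight-shape Schur functions and prove that expansion via Schützenberger's jeu de taquin. Using the adjunction $\langle s_\lambda, s_\mu f\rangle = \langle s_{\lambda/\mu}, f\rangle$ for $f \in \Lambda$, which follows from the Hopf-algebra structure of $\Lambda$ (the coproduct $\Delta s_\lambda = \sum_\mu s_\mu \otimes s_{\lambda/\mu}$ is adjoint to multiplication), specializing to $f = s_\nu$ yields
\[c^\lambda_{\mu,\nu} = \langle s_{\lambda/\mu}, s_\nu\rangle.\]
Hence it suffices to show that $s_{\lambda/\mu} = \sum_\nu L^\lambda_{\mu,\nu}\, s_\nu$, where $L^\lambda_{\mu,\nu}$ denotes the number of Littlewood--Richardson tableaux of shape $\lambda/\mu$ and content $\nu$.

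Next I would invoke (or reprove by induction from Pieri's rule) the standard monomial formula $s_{\lambda/\mu} = \sum_T x^T$, summed over semistandard Young tableaux of skew shape $\lambda/\mu$. Grouping these tableaux by their rectifications $\mathrm{rect}(T)$ (the result of iterated jeu de taquin slides) gives
\[s_{\lambda/\mu} = \sum_U N^\lambda_\mu(U)\, x^U,\]
with $U$ ranging over straight-shape SSYT and $N^\lambda_\mu(U) := |\mathrm{rect}^{-1}(U)|$. To read off the coefficient of $s_\nu$, I would fix the ``superstandard'' tableau $U_\nu$ of shape $\nu$ (row $i$ filled with $i$'s) and observe that $\mathrm{rect}(T) = U_\nu$ exactly when the reverse-row reading word of $T$ is a lattice word of content $\nu$. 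This is the Yamanouchi condition defining LR tableaux.

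The main obstacle is establishing two classical properties of jeu de taquin: (a) $\mathrm{rect}(T)$ is well-defined, independent of the order in which the slides are performed; and (b) $N^\lambda_\mu(U)$ depends only on the shape of $U$, not on $U$ itself, so that the coefficient of $s_\nu$ above equals $N^\lambda_\mu(U_\nu) = L^\lambda_{\mu,\nu}$. Both follow from the fact that jeu de taquin slides preserve Knuth equivalence of the reading word: (a) is Schützenberger's confluence theorem, while (b) can be deduced via Haiman's dual equivalence graphs, which provide explicit bijections between fibers $\mathrm{rect}^{-1}(U)$ and $\mathrm{rect}^{-1}(U')$ whenever $U$ and $U'$ share a shape. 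The final vanishing assertion $c^\lambda_{\mu,\nu} = 0$ when $\mu \not\subseteq \lambda$ is then immediate, since by convention $s_{\lambda/\mu} = 0$ in that case (no skew SSYT exist).
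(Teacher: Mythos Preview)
The paper does not prove this theorem at all: it is stated as a classical result with references to Fulton, Macdonald, and Stanley, and no argument is given. So there is no ``paper's own proof'' to compare against.

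Your outline is a correct sketch of one of the standard proofs --- essentially the jeu de taquin approach in Fulton's \emph{Young Tableaux}, which is one of the cited references. The reduction to $c^\lambda_{\mu,\nu}=\langle s_{\lambda/\mu},s_\nu\rangle$, the grouping of skew SSYT by rectification, the identification of $\mathrm{rect}^{-1}(U_\nu)$ with Littlewood--Richardson tableaux via the Yamanouchi condition, and the shape-independence of $|\mathrm{rect}^{-1}(U)|$ are exactly the right ingredients. The two hard facts you flag (confluence of jeu de taquin and shape-independence of the fiber size) are indeed where all the work lies; your attribution of (b) to dual equivalence is one valid route, though the more common textbook argument goes through Knuth equivalence and the uniqueness of the insertion tableau in RSK. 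Either way, what you have written is an accurate high-level plan, not a proof, and the paper expects no more than the citation.
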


Littlewood--Richardson coefficients are the structure constants for multiplication in the Schur basis. In other words, for all partitions $\mu, \nu$, we have 
\[s_\mu s_\nu = \sum_{\lambda} c^{\lambda}_{\mu, \nu} s_\lambda.\]
Moreover, for all partitions $\lambda, \mu$, the skew Schur function $s_{\lambda / \mu}$ is given by
\[s_{\lambda / \mu} = \sum_{\nu} c^{\lambda}_{\mu, \nu} s_\nu.\]

Let us define a \emph{horizontal strip} to be a skew shape $\lambda / \mu$ with no two boxes in the same column and define a \emph{vertical strip} to be a skew shape with no two boxes in the same row. One important special case of the Littlewood--Richardson rule is the following.

\begin{theorem}[Pieri's Formula, {\cite[(5.16, 5.17)]{MR3443860}, \cite[Theorem~7.15.7]{MR4621625}}]\label{thm:pieri}
Let~$\mu$ be a partition, and let $r \geq 0$. Then,
\[
    h_r s_\mu = \sum_{\substack{\lambda \\ \text{$\lambda / \mu$ is a horizontal strip} \\ |\lambda/\mu| = r}} s_\lambda
\]
and 
\[
    e_r s_\mu = \sum_{\substack{\lambda \\ \text{$\lambda / \mu$ is a vertical strip} \\ |\lambda/\mu| = r}} s_\lambda.
\]
\end{theorem}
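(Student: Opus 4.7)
The plan is to derive both halves of Pieri's formula as immediate special cases of the Littlewood--Richardson rule just stated, using the identifications $h_r = s_{(r)}$ and $e_r = s_{(1^r)}$.

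For the horizontal-strip case, I would expand $h_r s_\mu = s_{(r)} s_\mu = \sum_\lambda c^\lambda_{(r),\mu} s_\lambda$ and show $c^\lambda_{(r),\mu} \in \{0, 1\}$ with the claimed indicator. A Littlewood--Richardson tableau of shape $\lambda/\mu$ with content $(r)$ has every entry equal to $1$. The semistandard column-strict condition then forces $\lambda/\mu$ to have at most one box per column, i.e., to be a horizontal strip of size $r$, and the lattice-word condition is automatic since only the symbol $1$ appears. Conversely, every such horizontal strip admits the unique filling by $1$'s.

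For the vertical-strip case, I would similarly write $e_r s_\mu = s_{(1^r)} s_\mu = \sum_\lambda c^\lambda_{(1^r),\mu} s_\lambda$. A Littlewood--Richardson tableau with content $(1^r)$ contains each of $1, 2, \ldots, r$ exactly once. The lattice-word condition, applied symbol by symbol, forces these entries to appear in the order $1, 2, \ldots, r$ along the reverse row reading word; the weakly increasing row condition then forbids any two entries from sharing a row (since in a single row they would be read right-to-left, reversing their order), so $\lambda/\mu$ must be a vertical strip of size $r$. Conversely, any vertical strip admits the unique admissible filling that labels its boxes $1, 2, \ldots, r$ from top to bottom, and the column-strict condition is automatic because consecutive labels lie in strictly descending rows.

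I do not anticipate any serious obstacle: once the Littlewood--Richardson rule is in hand, both identities reduce to a quick case check. As a sanity check and a way to halve the work, one can note that applying the involution $\omega$ (which sends $h_r \mapsto e_r$, $s_\mu \mapsto s_{\mu^T}$, and swaps horizontal and vertical strips via transposition of shapes) exchanges the two formulas, so proving either one immediately yields the other.
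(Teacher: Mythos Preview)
Your argument is correct: deriving both Pieri rules from the Littlewood--Richardson rule via $h_r = s_{(r)}$ and $e_r = s_{(1^r)}$ is standard, and your case analyses (content $(r)$ forces all $1$'s and hence a horizontal strip; content $(1^r)$ forces the reverse reading word to be $1,2,\ldots,r$, which is incompatible with two boxes in one row) go through as written. The remark that $\omega$ exchanges the two identities is also a legitimate shortcut.

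The paper, however, does not prove \cref{thm:pieri} at all: it is stated with external citations (Macdonald, Stanley) and used as a black box. So there is no approach to compare against; you have simply supplied a proof where the paper defers to the literature. Your derivation is the natural one given that the Littlewood--Richardson rule is already on the table in the paper's preliminaries.
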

\subsection{The coalgebra structure}\label{subsec:coalg}
In 2021, Orellana and Zabrocki used the coalgebra structure of $\Lambda$ to study the restriction problem \cite{MR4275829}. This coalgebra structure will also be useful in the proof of \cref{thm:main}. For an introduction to the material in this subsection, see the 2020 lecture notes of Grinberg and Reiner on Hopf algebras in combinatorics \cite[Chapter~2]{grinberg2020hopfalgebrascombinatorics}.

Let $\nabla \colon \Lambda \otimes \Lambda \to \Lambda$ denote the multiplication map, and let $\eta \colon \Z \to \Lambda$ denote the natural inclusion. (All tensor products in this article are over $\Z$.) Define the \emph{comultiplication} $\Delta \colon \Lambda \to \Lambda \otimes \Lambda$ and \emph{counit} $\epsilon \colon \Lambda \to \Z$ to be the $\Z$-linear maps given respectively by 
\begin{equation}\label{eq:comultiplication}\Delta(s_\lambda) = \sum_{\mu} s_\mu \otimes s_{\lambda / \mu} = \sum_{\mu, \nu} c^\lambda_{\mu, \nu} s_\mu \otimes s_\nu\end{equation}
and 
\[\epsilon(s_\lambda) = \begin{cases}
1 & \mbox{if $\lambda = \emptyset$} \\
0 & \mbox{otherwise}
\end{cases}
\] for all partitions~$\lambda$. It is well known that with the operations~$\Delta$ and~$\epsilon$, the ring~$\Lambda$ is a commutative and cocommutative graded Hopf algebra over~$\Z$. The antipode $S \colon \Lambda \to \Lambda$ is given by $S(f) = (-1)^{\deg f} \omega(f)$ for all homogeneous $f \in \Lambda$ \cite{MR506405}.

Because the Littlewood--Richardson coefficients $c^\lambda_{\mu, \nu}$ appearing in \eqref{eq:comultiplication} are also the structure constants for multiplication in the Schur basis, the comultiplication~$\Delta$ is adjoint under the Hall inner product to multiplication. That is, for all $f, g, h \in \Lambda$, we have \[\langle \Delta(f), g \otimes h\rangle = \langle f, gh\rangle,\] where the angle brackets on the left-hand side of the equation denote the bilinear form $\langle \bullet, \bullet \rangle \colon \Lambda \otimes \Lambda \times \Lambda \otimes \Lambda \to \Z$ given by \[\langle a \otimes b, c \otimes d \rangle = \langle a, c \rangle \langle b, d \rangle.\] We remark that this is one of the defining properties of a \emph{positive self-adjoint Hopf (PSH) algebra}, of which~$\Lambda$ is an example \cite[Chapter~2]{MR643482}. However, the definition of a positive self-adjoint Hopf algebra is not necessary for our proofs.

For all integers $k \geq 0$, let $\Lambda^{\otimes k}$ denote the $k$-fold tensor power of $\Lambda$; that is, \[\Lambda^{\otimes k} = \underbrace{\Lambda \otimes \cdots \otimes \Lambda}_{k}.\] Define the iterated multiplication $\nabla^{(k - 1)} \colon \Lambda^{\otimes k} \to \Lambda$ and iterated comultiplication $\Delta^{(k - 1)} \colon \Lambda \to \Lambda^{\otimes k}$ by
\begin{align*}
\nabla^{(k - 1)} &= \nabla \circ \cdots \circ (\nabla \otimes \id^{\otimes(k - 3)}) \circ (\nabla \otimes \id^{\otimes(k - 2)})\\
\Delta^{(k - 1)} &= (\Delta \otimes \id^{\otimes(k - 2)}) \circ (\Delta \otimes \id^{\otimes(k - 3)}) \circ \cdots \circ \Delta.
\end{align*}

\subsection{Plethysm}
There are two operations called ``plethysm'', both denoted using square brackets, that we will refer to in this article. The first is the plethysm of one symmetric function by another, which is classical and was mentioned in \cref{sec:introduction}. The second is the plethysm of a symmetric function by an element of a polynomial ring $\Z[t_1, \ldots, t_\ell]$, which we describe below. For more information, see the 2011 exposition of Loehr and Remmel, which describes the plethystic calculus more generally \cite{MR2765321}.
\begin{lemma}[{\cite[Example~1]{MR2765321}}]\label{lemma:plethysm}
    Let $\ell \geq 0$. There exists a unique binary operation $\bullet[\bullet] \colon \Lambda \times \Z[ t_1, \cdots, t_\ell] \to \Z[t_1, \cdots, t_\ell]$ satisfying the following properties.
    \begin{enumerate}[label=(\roman*)]
        \item For all $g \in \Z[t_1, \cdots, t_\ell]$, the function $\bullet[g] \colon \Lambda \to \Z[t_1, \cdots, t_\ell]$ is a ring homomorphism.
        \item For all $k > 0$, the function $p_k[\bullet] \colon \Z[t_1, \cdots, t_\ell] \to \Z[t_1, \cdots, t_\ell]$ is a ring homomorphism.
        \item For all~$k$ and~$i$, we have $p_k[t_i] = t_i^k$.
    \end{enumerate}
\end{lemma}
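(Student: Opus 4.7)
The plan is to use the universal property of polynomial rings. The ring of symmetric functions $\Lambda$ is freely generated as a commutative $\Z$-algebra by the power sum symmetric functions $p_1, p_2, p_3, \ldots$, and $\Z[t_1, \ldots, t_\ell]$ is of course freely generated as a commutative $\Z$-algebra by $t_1, \ldots, t_\ell$. Consequently, a ring homomorphism out of either of these rings is uniquely determined by its values on the generators, and any prescribed values on the generators extend to a unique ring homomorphism.

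For uniqueness, suppose $\bullet[\bullet]$ satisfies (i), (ii), and (iii). Condition (iii) prescribes the value of $p_k[t_i] = t_i^k$ for every $k \geq 1$ and every $i \in \{1, \ldots, \ell\}$. Condition (ii) says that for each fixed $k$, the map $p_k[\bullet] \colon \Z[t_1, \ldots, t_\ell] \to \Z[t_1, \ldots, t_\ell]$ is a ring homomorphism, so its value on an arbitrary polynomial $g$ is determined by its values on the generators $t_i$. Hence $p_k[g]$ is determined for all $k$ and $g$. Condition (i) then says that for each fixed $g$, the map $\bullet[g] \colon \Lambda \to \Z[t_1, \ldots, t_\ell]$ is a ring homomorphism, so it is determined by its values on the algebra generators $p_k$ of $\Lambda$. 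We conclude that $f[g]$ is determined for every $f \in \Lambda$ and every $g \in \Z[t_1, \ldots, t_\ell]$.

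For existence, we run this same argument in reverse. For each $k \geq 1$, let $\phi_k \colon \Z[t_1, \ldots, t_\ell] \to \Z[t_1, \ldots, t_\ell]$ denote the unique ring homomorphism satisfying $\phi_k(t_i) = t_i^k$, and define $p_k[g] := \phi_k(g)$. Then for each $g \in \Z[t_1, \ldots, t_\ell]$, let $\psi_g \colon \Lambda \to \Z[t_1, \ldots, t_\ell]$ denote the unique ring homomorphism satisfying $\psi_g(p_k) = p_k[g]$, and define $f[g] := \psi_g(f)$. Now (i) holds because $\psi_g$ is by definition a ring homomorphism; (ii) holds because $p_k[\bullet] = \phi_k$ is by definition a ring homomorphism; and (iii) holds because $p_k[t_i] = \phi_k(t_i) = t_i^k$ by construction.

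The proof is essentially formal and there is no substantive obstacle: the entire content is the universal property applied twice, once for each of the two slots of the plethysm. The only item that requires any care is keeping track of which ring is being mapped out of at each stage, and hence which of the two generating sets — the power sums $p_k$ or the variables $t_i$ — controls which map.
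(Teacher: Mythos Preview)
Your argument has a genuine gap: the claim that $\Lambda$ is freely generated over $\Z$ by the power sums $p_1, p_2, \ldots$ is false. Over $\mathbb{Q}$ this is true, but over $\Z$ the power sums only generate the proper subring $\Z[p_1, p_2, \ldots] \subsetneq \Lambda$; for instance $e_2 = (p_1^2 - p_2)/2 \notin \Z[p_1,p_2,\ldots]$. Consequently, in your existence step you are not entitled to define a ring homomorphism $\psi_g \colon \Lambda \to \Z[t_1,\ldots,t_\ell]$ merely by prescribing $\psi_g(p_k)$. What your construction actually gives is a ring homomorphism $\Lambda_{\mathbb{Q}} \to \mathbb{Q}[t_1,\ldots,t_\ell]$, and you still owe the verification that it carries $\Lambda$ into $\Z[t_1,\ldots,t_\ell]$. (Your uniqueness argument survives, since two ring homomorphisms $\Lambda \to \Z[t_1,\ldots,t_\ell]$ that agree on the $p_k$ agree after tensoring with $\mathbb{Q}$ and hence agree on $\Lambda$.)

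The integrality is not hard to supply, but it is real content. One clean route: since $\Lambda$ is freely generated over $\Z$ by the $h_n$, it suffices to check $h_n[g] \in \Z[t_1,\ldots,t_\ell]$ for every $g$. Writing $g = \sum_n a_n M_n$ with monic monomials $M_n$ and $a_n \in \Z$, one has the identity $\sum_{n\ge 0} h_n[g]\,u^n = \prod_n \bigl(\sum_{m\ge 0} h_m(M_n,0,0,\ldots)u^m\bigr)^{a_n} \in \Z[t_1,\ldots,t_\ell]\llbracket u\rrbracket$ (compare the paper's \cref{lemma:h-pleth}), which exhibits each $h_n[g]$ as an integer polynomial. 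Alternatively one can invoke the fact that $\Z[t_1,\ldots,t_\ell]$ is a $\lambda$-ring with Adams operations $\phi_k$, which packages exactly this integrality. Note that the paper does not prove this lemma itself but cites Loehr--Remmel, so there is no in-paper argument to compare to; the point here is simply that your write-up, as it stands, asserts a false generating statement and omits the nontrivial step.
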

The operation from \cref{lemma:plethysm} can be expressed explicitly as a Hall inner product as follows.
\begin{lemma}[{\cite[Lemma~7.16]{MR4804579}}]\label{lemma:h-pleth}
Let $M_1, \ldots, M_N$ be monic monomials in the variables $t_1, \ldots, t_\ell$, and let $a_1, \ldots, a_N \in \Z$. For all $f \in \Lambda$, we have \begin{equation}\label{eq:h-pleth}
f\left[\sum_n a_n M_n\right] = \left\langle f, \prod_n H(M_n)^{a_n} \right\rangle.
\end{equation}
\end{lemma}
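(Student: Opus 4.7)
The plan is to characterize both sides of \eqref{eq:h-pleth} as the same ring homomorphism from $\Lambda_{\mathbb{Q}}$ to the appropriate coefficient ring, namely the unique one sending each power sum $p_k$ to $Q_k := \sum_n a_n M_n^k$. Since $\Lambda_{\mathbb{Q}} = \mathbb{Q}[p_1, p_2, \ldots]$ is a polynomial algebra on the power sums, a $\mathbb{Q}$-algebra homomorphism out of it is determined by its values on the $p_k$, so agreement on these generators forces agreement on all of $\Lambda$ (and then restricts to $\Lambda$ over $\mathbb{Z}$).

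For the left-hand side, property~(i) of \cref{lemma:plethysm} states that $\phi(f) := f[\sum_n a_n M_n]$ is a ring homomorphism in $f$. Combining~(ii) and~(iii) shows $p_k[M] = M^k$ for each monic monomial $M = t_{i_1} \cdots t_{i_s}$, and additivity of $p_k[\cdot]$ (itself part of~(ii)) then yields $\phi(p_k) = Q_k$.

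For the right-hand side, set $G := \prod_n H(M_n)^{a_n}$ and $\psi(f) := \langle f, G \rangle$. Using $H(t) = \exp\bigl(\sum_{k \geq 1} \tfrac{p_k}{k} t^k\bigr)$ together with $E(-t) = H(t)^{-1}$, I expand
\[G \;=\; \exp\!\left(\sum_{k \geq 1} \frac{p_k Q_k}{k}\right) \;=\; \sum_{\lambda \in \Par} \frac{p_\lambda Q_\lambda}{z_\lambda},\]
where $Q_\lambda := \prod_i Q_{\lambda_i}$. Applying the orthogonality $\langle p_\lambda, p_\mu \rangle = z_\lambda \delta_{\lambda \mu}$ then yields $\psi(p_\lambda) = Q_\lambda = \prod_i \psi(p_{\lambda_i})$. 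Since the $p_\lambda$ form a $\mathbb{Q}$-basis of $\Lambda_{\mathbb{Q}}$ with $p_\lambda = \prod_i p_{\lambda_i}$, this multiplicativity on the basis combined with the $\mathbb{Z}$-linearity of $\psi$ promotes $\psi$ to a $\mathbb{Q}$-algebra homomorphism on $\Lambda_{\mathbb{Q}}$ matching $\phi$ on each $p_k$, so $\phi = \psi$.

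The main technical subtlety is that $G$ lives in a completion: it is a symmetric formal power series in the $x_i$ whose coefficients can involve arbitrarily high powers of the $t_j$ (especially when some $a_n < 0$), so the expansion $G = \sum_\lambda z_\lambda^{-1} p_\lambda Q_\lambda$ and the pairing $\langle p_\lambda, G \rangle$ must be interpreted degree-by-degree in the $x_i$. For a fixed $f \in \Lambda$ of $x$-degree $d$, only the degree-$d$ component of $G$ matters, and only the finitely many $\lambda$ with $|\lambda| = d$ contribute; once this is handled, the agreement of $\phi$ and $\psi$ on the power-sum generators completes the argument.
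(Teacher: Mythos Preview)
Your proof is correct. The paper itself does not prove this lemma; it merely cites it from an external reference, so there is no in-paper argument to compare against. Your approach---reducing to agreement on the power-sum basis via the expansion $G = \sum_\lambda z_\lambda^{-1} p_\lambda Q_\lambda$ and the orthogonality $\langle p_\lambda, p_\mu\rangle = z_\lambda \delta_{\lambda\mu}$---is the standard one, and your treatment of the completion issue (that $G$ lives in $\prod_d \Lambda_d \otimes \Z[t_1,\ldots,t_\ell]$, so only finitely many $\lambda$ contribute for fixed $f$) is exactly right.

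One small streamlining: once you have shown $\phi(p_\lambda) = Q_\lambda = \psi(p_\lambda)$ for every partition $\lambda$, you are done by linearity alone, since the $p_\lambda$ form a $\mathbb{Q}$-basis of $\Lambda_{\mathbb{Q}}$. You do not actually need to argue separately that $\psi$ is a ring homomorphism; the multiplicativity $\psi(p_\lambda) = \prod_i \psi(p_{\lambda_i})$ is just a restatement of $Q_\lambda = \prod_i Q_{\lambda_i}$, and the real content is the basis-by-basis agreement with $\phi$.
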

We now collect a few basic results of the plethystic calculus. Recall that $S \colon \Lambda \to \Lambda$ is the antipode (\cref{subsec:coalg}).
\begin{lemma}[{Associativity of Plethysm, \cite[Theorem~5]{MR2765321}}]\label{lemma:associativity}
    Let $f, g \in \Lambda$ and $h \in \Z[t_1, \cdots, t_\ell]$. Then, \[f[g[h]] = (f[g])[h] \in \Z[t_1, \cdots, t_\ell].\]
\end{lemma}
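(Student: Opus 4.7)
The plan is to reduce both sides to a direct calculation on power sums. First, fix $g \in \Lambda$ and $h \in \Z[t_1, \ldots, t_\ell]$ and observe that both $f \mapsto f[g[h]]$ and $f \mapsto (f[g])[h]$ are ring homomorphisms $\Lambda \to \Z[t_1, \ldots, t_\ell]$: the left-hand side by property~(i) of \cref{lemma:plethysm} applied to $g[h] \in \Z[t_1, \ldots, t_\ell]$, and the right-hand side as the composition $f \mapsto f[g] \mapsto (f[g])[h]$, each factor of which is a ring homomorphism by property~(i). Extending scalars to $\mathbb{Q}$ so that $\Lambda \otimes_{\Z} \mathbb{Q} = \mathbb{Q}[p_1, p_2, \ldots]$ is freely generated by power sums, it suffices to establish the identity for $f = p_n$; the original integral identity then follows from the embedding $\Lambda \hookrightarrow \Lambda \otimes_{\Z} \mathbb{Q}$.

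Next, with $f = p_n$ fixed, I apply the same strategy in the second slot. Both $g \mapsto p_n[g[h]]$ and $g \mapsto (p_n[g])[h]$ are compositions of ring homomorphisms: the former via $g \mapsto g[h]$ (property~(i)) followed by $p_n[\bullet]$ on $\Z[t_1, \ldots, t_\ell]$ (property~(ii)); the latter via $g \mapsto p_n[g]$, which is a ring endomorphism of $\Lambda$ given by the Frobenius-type map $x_i \mapsto x_i^n$, followed by $\bullet[h]$ (property~(i)). Hence it suffices to take $g = p_m$ as well.

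The remaining task is to verify $p_n[p_m[h]] = (p_n[p_m])[h]$ by direct computation. Properties~(ii) and~(iii) show that $p_n[\bullet]$ acts on $\Z[t_1, \ldots, t_\ell]$ as the ring endomorphism $t_i \mapsto t_i^n$; hence $p_m[h] = h(t_1^m, \ldots, t_\ell^m)$ and so $p_n[p_m[h]] = h(t_1^{nm}, \ldots, t_\ell^{nm}) = p_{nm}[h]$. On the symmetric-function side, $p_n[p_m] = p_n\bigl[\sum_i x_i^m\bigr] = \sum_i x_i^{nm} = p_{nm}$, so $(p_n[p_m])[h] = p_{nm}[h]$, which matches.

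The main obstacle I anticipate is a technicality: property~(ii) of \cref{lemma:plethysm} is stated for the polynomial ring $\Z[t_1, \ldots, t_\ell]$ with finitely many variables, so one must justify separately that $p_n[\bullet]$ also defines a ring endomorphism of $\Lambda$ itself (needed in the reduction to $g = p_m$). This can be handled either by directly invoking the standard formula $p_n[f] = f(x_1^n, x_2^n, \ldots)$ on $\Lambda$, or by passing to a limit as $\ell \to \infty$ in property~(ii). With this subtlety addressed, the entire proof reduces to a careful bookkeeping of ring homomorphisms and a one-line computation on power sums.
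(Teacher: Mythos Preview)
The paper does not give a proof of this lemma; it simply cites \cite[Theorem~5]{MR2765321} (Loehr--Remmel). Your argument is correct and is precisely the standard reduction-to-power-sums proof that the cited reference gives: use that both sides are ring homomorphisms in $f$ (and then in $g$) to reduce to $f = p_n$, $g = p_m$, and verify directly that $p_n[p_m[h]] = p_{nm}[h] = (p_n[p_m])[h]$.

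The only point worth flagging is the one you already identify: \cref{lemma:plethysm} as stated only axiomatizes the operation $\Lambda \times \Z[t_1,\ldots,t_\ell] \to \Z[t_1,\ldots,t_\ell]$, so the facts that $f \mapsto f[g]$ is a ring endomorphism of $\Lambda$ and that $p_n[\bullet]$ is a ring endomorphism of $\Lambda$ come from the \emph{other} plethysm mentioned in the paper (the classical one on $\Lambda \times \Lambda$). This is standard and is how Loehr--Remmel set things up, but you should state explicitly that you are invoking the corresponding properties of the classical plethysm $\Lambda \times \Lambda \to \Lambda$ rather than \cref{lemma:plethysm} at those two steps.
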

\begin{lemma}[{Negation Rule, \cite[Theorem~6]{MR2765321}}]\label{lemma:negation}
    Let $f, g \in \Lambda$. Then, \[f[-g] = S(f)[g].\]
\end{lemma}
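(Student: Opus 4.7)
The plan is to observe that, for fixed $g$, both sides of the identity $f[-g] = S(f)[g]$ are ring homomorphisms in $f$, and then to verify the identity on a set of generators for $\Lambda \otimes \Q$. Throughout, I interpret the plethysm $\bullet[-g]$ via the standard extension of the operation in \cref{lemma:plethysm} that allows $g \in \Lambda$ (viewed, for instance, via its expansion in monomials in $x_1, x_2, \ldots$), so that $-g$ is a formal negation handled by the plethystic calculus.

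First, I check the ring-homomorphism claim. The map $\bullet[-g] \colon \Lambda \to \Lambda$ is a ring homomorphism directly by property (i) of \cref{lemma:plethysm}. For the right-hand side, the map $\bullet[g]$ is again a ring homomorphism by (i), so it suffices to observe that the antipode $S \colon \Lambda \to \Lambda$ is a ring homomorphism. This follows immediately from the explicit formula $S(f) = (-1)^{\deg f} \omega(f)$ recalled in \cref{subsec:coalg}, together with the standard fact that $\omega$ is a ring homomorphism (so the signs on homogeneous pieces multiply correctly when composing). Hence $f \mapsto S(f)[g]$ is a ring homomorphism.

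Since the power sums $p_1, p_2, \ldots$ generate $\Lambda \otimes \Q$ as a $\Q$-algebra, and any identity of elements of $\Lambda$ may be checked after tensoring with $\Q$, it is enough to verify the identity for $f = p_k$ with $k \geq 1$. For the right-hand side, we compute
\[ S(p_k)[g] = (-1)^k \omega(p_k)[g] = (-1)^k (-1)^{k-1} p_k[g] = -p_k[g], \]
using the standard fact $\omega(p_k) = (-1)^{k-1} p_k$. For the left-hand side, property (ii) of \cref{lemma:plethysm} asserts that $p_k[\bullet]$ is a ring homomorphism, hence additive, so $p_k[-g] + p_k[g] = p_k[0] = 0$ and thus $p_k[-g] = -p_k[g]$. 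The two sides agree on every generator $p_k$, and the result follows.

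No step presents a genuine obstacle; the only care required is in the setup, namely in interpreting $f[-g]$ via the plethystic calculus so that both sides are ring homomorphisms in $f$ and in legitimately reducing to power-sum generators via base change to $\Q$.
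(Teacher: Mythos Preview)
The paper does not give its own proof of this lemma; it simply quotes the statement from Loehr--Remmel \cite[Theorem~6]{MR2765321}. Your argument is the standard one and is correct: reduce to power-sum generators over $\Q$, using that $\bullet[-g]$ and $S(\bullet)[g]$ are both ring homomorphisms in the first argument, and then verify $p_k[-g] = -p_k[g] = S(p_k)[g]$ directly. The only mild looseness is that the properties (i) and (ii) you invoke are stated in \cref{lemma:plethysm} for plethysm into $\Z[t_1,\ldots,t_\ell]$ rather than into $\Lambda$, but as you note, the analogous properties for the plethysm $\Lambda \times \Lambda \to \Lambda$ are part of the standard plethystic calculus (and are discussed in the same Loehr--Remmel reference), so this is not a gap.
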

\begin{lemma}[{Monomial Substitution Rule, \cite[Theorem~7]{MR2765321}}]\label{lemma:monomialsubstitution}
    Let $f \in \Lambda$ and let $M_1, \ldots, M_N \in \Z[t_1, \ldots, t_\ell]$ be monic monomials. Then,
    \[f\left[\sum_n M_n\right] = f(M_1, \ldots, M_N).\]
\end{lemma}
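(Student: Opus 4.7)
The plan is to observe that both sides of the identity are ring homomorphisms in~$f$, and then verify agreement on generators. Fix the element $g = \sum_n M_n \in \Z[t_1, \ldots, t_\ell]$, and define $\phi \colon \Lambda \to \Z[t_1, \ldots, t_\ell]$ by $\phi(f) = f(M_1, \ldots, M_N)$, where as usual this means specializing $x_n$ to $M_n$ for $n \leq N$ and $x_n$ to $0$ for $n > N$. Then $\phi$ is a ring homomorphism because it is a polynomial evaluation. On the other hand, $f \mapsto f[g]$ is a ring homomorphism by \cref{lemma:plethysm}(i). Since the power sums $p_k$ generate $\Lambda$ as a $\mathbb{Q}$-algebra, it suffices to check that $\phi$ and $\bullet[g]$ agree on each $p_k$.

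For the power sum $p_k$, the evaluation side gives $\phi(p_k) = p_k(M_1, \ldots, M_N) = \sum_n M_n^k$ immediately. For the plethysm side, \cref{lemma:plethysm}(ii) tells us that $p_k[\bullet]$ is a ring homomorphism, so $p_k\bigl[\sum_n M_n\bigr] = \sum_n p_k[M_n]$. Writing each monic monomial as $M_n = \prod_i t_i^{a_{n,i}}$ and applying (ii) multiplicatively gives $p_k[M_n] = \prod_i p_k[t_i]^{a_{n,i}}$, which by (iii) equals $\prod_i t_i^{k a_{n,i}} = M_n^k$. Summing over $n$ yields $p_k[g] = \sum_n M_n^k$, matching $\phi(p_k)$.

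The only subtle point is that the power sums do not generate $\Lambda$ as a $\Z$-algebra, only after base change to $\mathbb{Q}$. I would resolve this by extending both $\phi$ and $\bullet[g]$ to $\mathbb{Q}$-algebra homomorphisms $\Lambda \otimes_\Z \mathbb{Q} \to \mathbb{Q}[t_1, \ldots, t_\ell]$; since the extensions agree on the $p_k$ and those generate $\Lambda \otimes_\Z \mathbb{Q}$ as a $\mathbb{Q}$-algebra, the extensions coincide, and the desired equality follows by restriction to $\Lambda$. This is a routine check rather than a genuine obstacle. An alternative I would consider is to apply \cref{lemma:h-pleth} with all $a_n = 1$, producing $f[g] = \langle f, \prod_n H(M_n)\rangle$, and then use the Cauchy identity $\prod_n H(M_n) = \sum_\lambda s_\lambda\cdot s_\lambda(M_1,\ldots,M_N)$ together with the reproducing property of the Schur functions to obtain $f(M_1, \ldots, M_N)$ in one line; either proof is short, but the power-sum argument is more self-contained relative to the characterization supplied by \cref{lemma:plethysm}.
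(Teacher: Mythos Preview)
The paper does not provide its own proof of this lemma; it is stated as a citation to Loehr and Remmel \cite[Theorem~7]{MR2765321} and used as a black box. So there is nothing in the paper to compare against.

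Your argument is correct. Verifying the identity on power sums using properties (i)--(iii) of \cref{lemma:plethysm} is exactly the standard approach, and you correctly address the $\Z$-versus-$\mathbb{Q}$ issue by base change. The alternative route via \cref{lemma:h-pleth} and the Cauchy identity also works, though note that in this paper \cref{lemma:h-pleth} is stated after \cref{lemma:monomialsubstitution}, so if you were writing for this paper you would want to check there is no circularity (there is not: \cref{lemma:h-pleth} is proved in the cited reference independently).
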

\subsection{The plethystic addition formula}
We will make heavy use of the following formula.
\begin{prop}[Plethystic Addition Formula, {\cite[(8.8)]{MR3443860} \cite[Section~3.2]{MR2765321}}]\label{prop:plethystic-addition}
    Let~$\lambda$ be a partition and let $f, g \in \Lambda$. Then,
    \[s_{\lambda}[f + g] = \sum_{\mu} s_{\mu}[f] s_{\lambda / \mu}[g].\]
\end{prop}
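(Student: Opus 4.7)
My plan is to reduce the identity to the fact that the power sums $p_k$ are primitive elements of the Hopf algebra~$\Lambda$. More precisely, I would first prove the stronger claim that, for every $F \in \Lambda$ and every $f, g \in \Lambda$,
\[F[f + g] = \nabla\bigl((\bullet[f] \otimes \bullet[g])(\Delta F)\bigr).\]
The proposition follows at once by specializing to $F = s_\lambda$ and invoking $\Delta(s_\lambda) = \sum_\mu s_\mu \otimes s_{\lambda / \mu}$ from~\eqref{eq:comultiplication}.

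To establish this general identity, I would observe that both sides, viewed as functions of~$F$, are ring homomorphisms $\Lambda \to \Lambda$. The left-hand side is a ring homomorphism by property~(i) of \cref{lemma:plethysm}. The right-hand side is the convolution of the ring homomorphisms $\bullet[f]$ and $\bullet[g]$; in any Hopf algebra whose target is a commutative ring, such a convolution is again a ring homomorphism, a short check that uses the fact that $\Delta$ is itself an algebra map together with commutativity in the target to reorder the four factors appearing in $(F \cdot G)[f+g]$. It therefore suffices to verify the identity on a generating set of $\Lambda \otimes \mathbb{Q}$, for which I would use the power sums.

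For $F = p_k$, the left-hand side equals $p_k[f] + p_k[g]$ by the additivity of power-sum plethysm in its second argument, which follows from property~(ii) of \cref{lemma:plethysm} once the plethystic formalism is extended from $\Z[t_1, \ldots, t_\ell]$ to $\overline{\Lambda}$. On the right-hand side, the power sums are primitive, $\Delta(p_k) = p_k \otimes 1 + 1 \otimes p_k$, which can be verified directly via the ``alphabet'' computation $p_k(X, Y) = p_k(X) + p_k(Y)$; applying $\bullet[f] \otimes \bullet[g]$ and then $\nabla$ therefore yields $p_k[f] \cdot 1 + 1 \cdot p_k[g] = p_k[f] + p_k[g]$ as well. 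The main technical obstacle is precisely this extension of \cref{lemma:plethysm} and \cref{lemma:h-pleth} from polynomial rings to $\overline{\Lambda}$, which is the routine generalization the paper alludes to when it says it will silently pass to $\overline{\Lambda}$; once that is in hand, the remainder of the argument is a short Hopf-algebraic computation.
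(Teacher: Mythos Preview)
The paper does not actually prove \cref{prop:plethystic-addition}: it is stated with citations to Macdonald and to Loehr--Remmel and used as a black box. The paper then \emph{derives} the operator identity $\bullet[f+g] = \nabla \circ (\bullet[f] \otimes \bullet[g]) \circ \Delta$ (its \cref{prop:plethystic-addition-2}) from \cref{prop:plethystic-addition} by unfolding definitions. Your proposal reverses this logical order: you give a direct Hopf-algebraic proof of the operator identity and then specialise to $F = s_\lambda$ to recover \cref{prop:plethystic-addition}.

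Your argument is sound. The two key points---that the convolution of two algebra homomorphisms into a commutative target is again an algebra homomorphism, and that the power sums are primitive---are standard and correctly invoked. Checking on the $p_k$ over $\mathbb{Q}$ is enough, since $\Lambda$ embeds in $\Lambda \otimes \mathbb{Q}$. The only loose end you flag yourself: \cref{lemma:plethysm} in the paper is literally stated for second arguments in $\Z[t_1,\ldots,t_\ell]$, so one must either invoke the classical plethysm $\Lambda \times \Lambda \to \Lambda$ directly (which the paper also uses but does not axiomatise) or pass through an inverse-limit argument; this is routine and you correctly identify it as the only genuine technical step. In short, you are supplying a proof where the paper simply cites one, and your route via \cref{prop:plethystic-addition-2} is in fact the more natural direction of implication.
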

For our purposes, it will be useful to rephrase \cref{prop:plethystic-addition} in terms of multiplication and comultiplication. To this end, for all $g \in \Lambda$, let $\bullet[g] \colon \Lambda \to \Lambda$ denote plethysm by $g$, which is a ring homomorphism. Explicitly, \[(\bullet[g])(f) = f[g]\] for all $f \in \Lambda$. \cref{prop:plethystic-addition} can then be written in the following form.

\begin{prop}\label{prop:plethystic-addition-2}
For all $f, g \in \Lambda$, we have \[\bullet[f + g] = \nabla \circ (\bullet[f] \otimes \bullet[g]) \circ \Delta.\]
\end{prop}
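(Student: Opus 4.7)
The plan is to verify the identity by evaluating both sides on the Schur basis $(s_\lambda)_{\lambda \in \Par}$ and invoking the plethystic addition formula (\cref{prop:plethystic-addition}) directly. Both sides are $\Z$-linear maps $\Lambda \to \Lambda$, so it suffices to check equality after applying to $s_\lambda$ for an arbitrary partition $\lambda$.

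First I would unfold the right-hand side. Applying $\Delta$ to $s_\lambda$ and using the description \eqref{eq:comultiplication}, we obtain
\[\Delta(s_\lambda) = \sum_\mu s_\mu \otimes s_{\lambda/\mu}.\]
Then the map $\bullet[f] \otimes \bullet[g]$ sends this to $\sum_\mu s_\mu[f] \otimes s_{\lambda/\mu}[g]$, and finally $\nabla$ sends this to
\[\sum_\mu s_\mu[f] \, s_{\lambda/\mu}[g].\]
By \cref{prop:plethystic-addition}, this equals $s_\lambda[f+g] = (\bullet[f+g])(s_\lambda)$, which matches the left-hand side.

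Since the two $\Z$-linear maps agree on every $s_\lambda$, they agree on all of $\Lambda$. There is essentially no obstacle here: the proposition is a pure repackaging of \cref{prop:plethystic-addition} using the fact, recalled in \cref{subsec:coalg}, that the Littlewood--Richardson coefficients simultaneously describe the structure constants of multiplication in the Schur basis and the coefficients appearing in $\Delta$. The only mild subtlety is ensuring that the rewriting is genuinely an equality of maps rather than merely an equality of values; this is immediate from $\Z$-linearity of $\nabla$, $\Delta$, and the plethysm operators $\bullet[f]$ and $\bullet[g]$.
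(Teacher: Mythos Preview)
Your proof is correct and follows essentially the same approach as the paper: both verify the identity on the Schur basis by unfolding $\Delta(s_\lambda)$ and recognizing the result as the plethystic addition formula (\cref{prop:plethystic-addition}). The paper's proof is slightly terser, but the content is identical.
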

\begin{proof}
It suffices to show that \[(\bullet[f + g])(s_\lambda) = (\nabla \circ (\bullet[f] \otimes \bullet[g]) \circ \Delta)(s_\lambda)\] for all partitions $\lambda$. After unfolding the definitions, this is exactly \cref{prop:plethystic-addition}.
\end{proof}
By applying \cref{prop:plethystic-addition-2} repeatedly, we obtain the following generalization.
\begin{prop}\label{prop:plethystic-addition-3}
Let $k \geq 0$, and let $g_1, \ldots, g_k \in \Lambda$. Then, \[\bullet[g_1 + \cdots + g_k] = \nabla^{(k - 1)} \circ (\bullet[g_1] \otimes \cdots \otimes \bullet[g_k]) \circ \Delta^{(k - 1)}.\]
\end{prop}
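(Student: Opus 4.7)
The plan is to prove the claim by induction on $k$, using \cref{prop:plethystic-addition-2} as the main engine. The case $k = 2$ is exactly \cref{prop:plethystic-addition-2}. The boundary cases $k = 0$ and $k = 1$ are degenerate: with the conventions $\nabla^{(-1)} = \eta$, $\Delta^{(-1)} = \epsilon$, and $\nabla^{(0)} = \Delta^{(0)} = \id$, they reduce to the identity $\bullet[0] = \eta \circ \epsilon$ (both sides kill all positive-degree components and act as the identity in degree zero, as follows from the monomial substitution rule) and the tautology $\bullet[g_1] = \bullet[g_1]$, respectively.

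For the inductive step, I would assume the result holds for some $k \geq 2$ and prove it for $k + 1$. Splitting the sum as $(g_1 + \cdots + g_k) + g_{k+1}$ and applying \cref{prop:plethystic-addition-2}, we obtain
\[\bullet[g_1 + \cdots + g_{k+1}] = \nabla \circ (\bullet[g_1 + \cdots + g_k] \otimes \bullet[g_{k+1}]) \circ \Delta.\]
Substituting the inductive hypothesis into the first tensor factor and using bifunctoriality of $\otimes$ to pull the iterated operations outside gives
\[\bullet[g_1 + \cdots + g_{k+1}] = \nabla \circ (\nabla^{(k-1)} \otimes \id) \circ (\bullet[g_1] \otimes \cdots \otimes \bullet[g_{k+1}]) \circ (\Delta^{(k-1)} \otimes \id) \circ \Delta.\]

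It then remains to identify the outer compositions with $\nabla^{(k)}$ and $\Delta^{(k)}$. Since $\nabla$ is associative, $\nabla \circ (\nabla^{(k-1)} \otimes \id)$ is simply a different parenthesization of the $(k+1)$-fold product on $\Lambda^{\otimes(k+1)}$ and therefore equals $\nabla^{(k)}$. Dually, because $\Delta$ is coassociative (as $\Lambda$ is a Hopf algebra, recalled in \cref{subsec:coalg}), $(\Delta^{(k-1)} \otimes \id) \circ \Delta = \Delta^{(k)}$. Substituting these identifications completes the induction.

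There is no real obstacle: the argument is essentially formal once \cref{prop:plethystic-addition-2} is available. The only mildly subtle point is verifying that the iterated (co)multiplications defined in \cref{subsec:coalg} admit the ``peel off one factor'' decompositions $\nabla^{(k)} = \nabla \circ (\nabla^{(k-1)} \otimes \id)$ and $\Delta^{(k)} = (\Delta^{(k-1)} \otimes \id) \circ \Delta$, which is a standard consequence of (co)associativity in a (co)algebra.
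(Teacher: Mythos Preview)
Your proposal is correct and is exactly the approach the paper takes: the paper simply says ``By applying \cref{prop:plethystic-addition-2} repeatedly, we obtain the following generalization,'' and you have carefully written out that induction, including the degenerate cases $k=0,1$ with the standard conventions $\nabla^{(-1)}=\eta$, $\Delta^{(-1)}=\epsilon$.
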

\subsection{The Jacobi--Trudi identities}
A key ingredient in the proof of \cref{thm:main} is the well known Jacobi--Trudi identities, which we now recall.
\begin{theorem}[Jacobi--Trudi Identities, {\cite[(5.5)]{MR3443860}, \cite[Section~7.16]{MR4621625}}]\label{thm:jacobi-trudi}
Let $\ell \geq 0$, and let~$\lambda \in \N^\ell$ with $\lambda_1 \geq \cdots \geq \lambda_\ell$. Then,
\begin{equation}\label{eq:first-jacobi-trudi}s_{\lambda} = \det(h_{\lambda_i - i + j})_{i, j = 1}^{\ell}\end{equation} and
\begin{equation}\label{eq:second-jacobi-trudi}s_{\lambda^T} = \det(e_{\lambda_i - i + j})_{i, j = 1}^{\ell}.\end{equation}
\end{theorem}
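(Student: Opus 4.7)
The plan is to prove the complete-homogeneous identity \eqref{eq:first-jacobi-trudi} via the Lindström--Gessel--Viennot (LGV) lemma, and then to derive the elementary identity \eqref{eq:second-jacobi-trudi} as a free corollary by applying the involutive ring homomorphism $\omega$. Since $\omega$ is a ring homomorphism with $\omega(h_n) = e_n$ and $\omega(s_\mu) = s_{\mu^T}$, applying $\omega$ to both sides of \eqref{eq:first-jacobi-trudi} immediately gives
\[s_{\lambda^T} = \det(\omega(h_{\lambda_i - i + j}))_{i,j=1}^\ell = \det(e_{\lambda_i - i + j})_{i,j=1}^\ell,\]
so after this reduction I focus on \eqref{eq:first-jacobi-trudi}.

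For \eqref{eq:first-jacobi-trudi}, I would work in $\Z[x_1, \ldots, x_N]$ for arbitrary $N \geq \ell$, with the identity in $\Lambda$ following by taking a limit. Set up the lattice-path model on $\Z \times \{0, 1, \ldots, N\}$ whose edges are unit east and north steps, where each east edge at height $k$ carries weight $x_k$ and each north edge carries weight $1$. Place sources $u_i = (1 - i,\, 0)$ and sinks $v_j = (\lambda_j - j + 1,\, N)$ for $i, j = 1, \ldots, \ell$. Every path from $u_i$ to $v_j$ takes exactly $\lambda_j - j + i$ east steps, and the total weight of such paths equals $h_{\lambda_j - j + i}$, since choosing the heights of the east steps amounts to choosing a multisubset of $\{x_1, \ldots, x_N\}$ of that size. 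The convention $h_m = 0$ for $m < 0$ matches the absence of paths when $v_j$ lies strictly west of $u_i$.

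Next I would invoke the LGV lemma. The determinant $\det(h_{\lambda_i - i + j})$ expands as $\sum_\sigma \operatorname{sgn}(\sigma) \prod_i w(u_i \to v_{\sigma(i)})$, which the standard sign-reversing tail-swap involution (at the first crossing of any two paths in the tuple) identifies with the total weight of $\ell$-tuples of pairwise non-intersecting paths from $u_i$ to $v_i$. Because both $1 - i$ and $\lambda_j - j + 1$ are strictly decreasing in their indices, the identity is the unique permutation admitting non-intersecting tuples. Finally, these non-intersecting tuples biject with semistandard Young tableaux of shape $\lambda$: the heights of the east steps of the $i$-th path, read left to right, form the entries of row $i$, and the non-intersection condition translates exactly into strict-column and weak-row monotonicity. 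This yields $\det(h_{\lambda_i - i + j}) = \sum_T x^T = s_\lambda$.

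The main obstacle is the LGV bookkeeping: one must verify carefully that the tail-swap involution is well-defined, sign-reversing, and fixed-point-free on the crossing tuples, and that the bijection between non-crossing tuples and SSYT preserves weights. These verifications are all standard but demand care with index conventions and the boundary case $\ell = 0$, where the determinant is the empty product $1 = s_\emptyset$.
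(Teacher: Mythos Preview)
The paper does not supply its own proof of this theorem: it is quoted as a classical result with references to Macdonald and Stanley, and then used as a tool in the proof of \cref{thm:s-lambda-h-perp}. So there is nothing in the paper to compare your argument against.

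That said, your proposal is correct and is exactly the standard textbook proof. The reduction of \eqref{eq:second-jacobi-trudi} to \eqref{eq:first-jacobi-trudi} via $\omega$ is immediate, and the LGV argument for \eqref{eq:first-jacobi-trudi} is the one given, for instance, in Stanley's treatment. One small point of care in your lattice setup: with vertices in $\Z \times \{0,1,\ldots,N\}$ and east edges at height $k$ weighted $x_k$, you should either start the paths at height $1$ or declare east steps at height $0$ to have weight $1$ (equivalently, forbid them), so that no spurious variable $x_0$ appears; the bijection with semistandard tableaux then goes through with entries in $\{1,\ldots,N\}$ as intended. You already flagged index bookkeeping as the main hazard, and that is indeed the only place requiring attention.
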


\section{Proof of \texorpdfstring{\cref{cor:plethysm,thm:main}}{Theorems~\ref{cor:plethysm}~and~\ref{thm:main}}}
\subsection{Plethysm adjoints}
For all $g \in \overline{\Lambda}$, let $\bullet[g^\perp] \colon \Lambda \to \overline{\Lambda}$ denote the adjoint to plethysm by $g$ under the Hall inner product. In other words,
\begin{equation}\label{eq:f-g-perp}f[g^\perp] = \sum_{\mu} \langle f, s_\mu[g]\rangle s_\mu\end{equation} for all $f \in \Lambda$. The notation $\bullet[g^\perp]$ was introduced by Ryba in 2021 \cite{MR4658448}.

Because $\bullet[g]$ is an algebra homomorphism, $\bullet[g^\perp]$ is a coalgebra homomorphism. Moreover, by taking the adjoint of both sides of \cref{prop:plethystic-addition-3}, we obtain the following.
\begin{prop}[{\cite[Proposition~2.5]{MR4658448}}]\label{prop:plethystic-addition-4}
Let $k \geq 0$, and let $g_1, \ldots, g_k \in \Lambda$. Then, \[\bullet[(g_1 + \cdots + g_k)^\perp] = \Delta^{(k - 1)} \circ (\bullet[g_1^\perp] \otimes \cdots \otimes \bullet[g_k^\perp]) \circ \nabla^{(k - 1)}.\]
\end{prop}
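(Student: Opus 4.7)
The plan is to derive this identity as a direct consequence of \cref{prop:plethystic-addition-3}, by taking the adjoint of both sides (with $g = g_1 + \cdots + g_k$) under the Hall inner product and its induced bilinear pairing on tensor powers of $\Lambda$.

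On the left, the adjoint of $\bullet[g]\colon \Lambda \to \Lambda$ is by definition $\bullet[g^\perp]$, so the adjoint of the left-hand side of \cref{prop:plethystic-addition-3} is precisely the operator $\bullet[(g_1 + \cdots + g_k)^\perp]$ that appears on the left of our claim. On the right, I would invoke three standard facts in turn: first, composition reverses under adjoints, $(A \circ B)^* = B^* \circ A^*$; second, the adjoint of a tensor product of maps is the tensor product of their adjoints; and third, as already recorded in \cref{subsec:coalg}, the multiplication $\nabla$ and the comultiplication $\Delta$ on $\Lambda$ are adjoint under the Hall inner product, from which it follows (by iterating) that $\nabla^{(k-1)}$ and $\Delta^{(k-1)}$ are adjoint as well. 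Substituting $(\bullet[g_i])^* = \bullet[g_i^\perp]$ in the resulting expression produces the stated formula directly.

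The only point that requires a little care is that $\bullet[g^\perp]$ lands in $\overline{\Lambda}$ rather than $\Lambda$, so one must check that these adjoint manipulations remain valid in the completed setting. This is routine: the Hall inner product respects the grading, so every identity can be tested degree-by-degree against Schur functions, where the relevant graded pieces are finite-dimensional and the standard adjoint apparatus applies. As an alternative verification path (if one wants to bypass the categorical bookkeeping entirely), one can pair both sides with $s_\mu \otimes s_{\nu_1} \otimes \cdots \otimes s_{\nu_k}$ and reduce the resulting identity of numbers to \cref{prop:plethystic-addition-3} via the adjointness $\langle \Delta f, g \otimes h \rangle = \langle f, gh \rangle$ and the definition of $\bullet[g_i^\perp]$. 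I do not foresee any substantive obstacle; the entire argument amounts to applying $({-})^*$ to the formula of \cref{prop:plethystic-addition-3}.
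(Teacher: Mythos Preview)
Your proposal is correct and matches the paper's approach exactly: the paper simply remarks that the identity follows ``by taking the adjoint of both sides of \cref{prop:plethystic-addition-3}'', and your writeup spells out precisely this adjunction (reversal of composition, adjointness of $\nabla$ and $\Delta$, and $(\bullet[g_i])^* = \bullet[g_i^\perp]$). Your remark on handling the completion $\overline{\Lambda}$ degree-by-degree is a reasonable elaboration of a point the paper leaves implicit.
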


Of special note is the plethysm adjoint \[\Fa = \bullet[H^\perp]\colon \Lambda \to \overline{\Lambda},\] which the author has referred to as the \emph{Frobenius transform} in 2024 \cite[Remark~3.2]{MR4804579}. By \eqref{eq:littlewood}, the Frobenius transform is given on the Schur basis by
\[s_\lambda[H^\perp] = \sum_\mu r^\mu_\lambda s_\mu.\]
Thus, \cref{thm:main} is equivalent to the statement that for all partitions $\lambda$ with $\lambda_1 \leq 3$, we have
\begin{equation}\label{eq:main}
    s_\lambda[H^\perp] = H \sum_{r, \nu} e_r s_{(\nu_1 + \nu_2 - \nu_3 - r)/2, (\nu_1 - \nu_2 + \nu_3 - r)/2, (-\nu_1 + \nu_2 + \nu_3 - r)/2} s_{\lambda / \nu^T},
\end{equation}
where the sum is over all integers~$r$ and all partitions~$\nu$ such that $\nu^T \subseteq \lambda$ and $(-\nu_1 + \nu_2 + \nu_3 - r)/2$ is a nonnegative integer. In what follows, our goal will be to prove \eqref{eq:main}.

Let us recall the following formulas for $\F{h_\lambda} = h_\lambda[H^\perp]$ and $\F{e_\lambda} = e_\lambda[H^\perp]$, where~$\lambda$ is a partition.

\begin{theorem}[{\cite[Equation (6)]{MR4245137}, \cite[Theorem~1.4(a,b)]{MR4804579}}]\label{thm:f-he}
Let $\ell \geq 0$, and let $\lambda \in \N^\ell$.
\begin{enumerate}[label=(\alph*)]
\item \label{item:f-h} We have \[h_\lambda[H^\perp] = \sum_{M} \prod_{j \in \N^\ell} h_{M(j)},\] where the sum is over all functions $M\colon \N^\ell \to \N$ such that \[\sum_{j \in \N^\ell} M(j) j = \lambda.\]
\item \label{item:f-e} We have \[e_\lambda[H^\perp] = \sum_{M} \prod_{j \in \{0, 1\}^\ell}\begin{cases}
h_{M(j)} & \mbox{if $j_1 + \cdots + j_\ell$ is even} \\
e_{M(j)} & \mbox{if $j_1 + \cdots + j_\ell$ is odd}
\end{cases},\] where the sum is over all functions $M \colon \{0, 1\}^\ell \to \N$ such that \[\sum_{j \in \{0, 1\}^\ell} M(j) j = \lambda.\]
\end{enumerate}
\end{theorem}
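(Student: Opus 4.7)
Both parts can be recast as generating-function identities over $t = (t_1, \ldots, t_\ell)$. Writing $t^j = t_1^{j_1} \cdots t_\ell^{j_\ell}$ for $j \in \N^\ell$ and expanding $H(u) = \sum_m h_m u^m$, part~(a) is equivalent to
\[\sum_{\lambda \in \N^\ell} h_\lambda[H^\perp]\, t^\lambda = \prod_{j \in \N^\ell} H(t^j),\]
and part~(b) to the analogue with $\sum_\lambda e_\lambda[H^\perp]\, t^\lambda$ on the left and $\prod_{j \in \{0,1\}^\ell,\, |j| \text{ even}} H(t^j) \cdot \prod_{j \in \{0,1\}^\ell,\, |j| \text{ odd}} E(t^j)$ on the right.

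I would prove each identity by pairing both sides against an arbitrary $f \in \Lambda$ under the Hall inner product in the $x$-variables, treating the $t_i$'s as parameters. For (a), the adjoint property $\langle h_\lambda[H^\perp], f\rangle = \langle h_\lambda, f[H]\rangle$ combined with $\sum_\lambda h_\lambda\, t^\lambda = \prod_i H(t_i)$ turns the left-side pairing into $\langle \prod_i H(t_i),\, f[H]\rangle_x$. Since $\prod_i H(t_i)$ is a Cauchy kernel in $x$ and $t$, the reproducing property reduces this to $f[H]$ evaluated at $x = (t_1, \ldots, t_\ell, 0, \ldots)$; because this evaluation is a ring homomorphism that agrees with $\bullet[H(t)]$ on the power-sum generators, the result is $f[\prod_i (1-t_i)^{-1}]$. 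On the other side, $\prod_j H(t^j)$ is itself a Cauchy kernel, in $x$ and the monomials $\{t^j : j \in \N^\ell\}$, so by \cref{lemma:monomialsubstitution} its pairing with $f$ yields $f[\sum_j t^j] = f[\prod_i (1-t_i)^{-1}]$. The two expressions agree.

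For (b), the strategy is the same, but now $\prod_i E(t_i)$ is the dual Cauchy kernel, so pairing it with $g(x)$ yields $(\omega g)(t)$; hence the left-side pairing becomes $\omega(f[H])(t)$. The right-hand product rewrites as the plethystic super-Cauchy kernel $H[X(Y - \varepsilon Z)] = \sum_\lambda s_\lambda(x)\, s_\lambda[Y - \varepsilon Z]$, where $X = p_1(x)$, $Y = \sum_{|j| \text{ even}} t^j$, $Z = \sum_{|j| \text{ odd}} t^j$, and $\varepsilon$ is the plethystic sign letter with $p_k[\varepsilon] = (-1)^k$; pairing this against $f$ yields $f[Y - \varepsilon Z]$.

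The principal obstacle is verifying $\omega(f[H])(t) = f[Y - \varepsilon Z]$. Both sides are linear in $f$ and multiplicative on the power-sum basis, so it suffices to check $f = p_k$. A direct computation in the $p$-basis shows $\omega(p_k[H]) = p_k[E]$ when $k$ is odd and $\omega(p_k[H]) = 1/p_k[H]$ when $k$ is even; evaluating at $x = t$ gives $\prod_i(1+t_i^k)$ and $\prod_i(1-t_i^k)$ respectively. These match $p_k[Y - \varepsilon Z] = \sum_{|j| \text{ even}} t^{kj} - (-1)^k \sum_{|j| \text{ odd}} t^{kj}$, which collapses by parity to $\prod_i(1+t_i^k)$ when $k$ is odd and $\prod_i(1-t_i^k)$ when $k$ is even.
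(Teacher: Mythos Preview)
The paper does not prove \cref{thm:f-he}; it is quoted from the cited references. The closest thing to a ``paper's own proof'' is the proof of the parallel \cref{thm:he-h-perp}, which replaces $H$ by $h_r$ throughout.

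Your argument is correct. For part~(a) it is essentially identical to what the paper does for \cref{thm:he-h-perp}\labelcref{item:h-h-perp}: pair both sides of the generating-function identity with an arbitrary $f$, use adjointness to pass $H^\perp$ to $H$ on the other side, invoke \cref{lemma:h-pleth} to turn the pairing into a plethystic evaluation, and use associativity (\cref{lemma:associativity}) plus the monomial substitution rule (\cref{lemma:monomialsubstitution}).

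For part~(b) you take a genuinely different route. The paper's argument for \cref{thm:he-h-perp}\labelcref{item:e-h-perp} works with $H(t_i)^{-1}=E(-t_i)$ so that \cref{lemma:h-pleth} applies directly with integer exponents; one obtains $(E(-t_1)\cdots E(-t_\ell))[H^\perp]=\prod_{j\in\{0,1\}^\ell} H(t^j)^{(-1)^{|j|}}$ and then reads off the coefficient of $(-1)^{|\lambda|}t^\lambda$, the signs cancelling because $|\lambda|\equiv\sum_{|j|\text{ odd}}M(j)\pmod 2$. You instead use the dual Cauchy kernel to produce $\omega(f[H])(t)$ on one side and a super-Cauchy kernel with the formal letter $\varepsilon$ on the other, reducing to a power-sum check. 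Both work; the paper's route stays entirely within the toolkit it has already set up (in particular it never leaves \cref{lemma:h-pleth}), whereas yours imports the $\varepsilon$-calculus, which is standard but not introduced here. Your power-sum verification is clean and makes the parity dichotomy very transparent.
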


For $r, \ell \geq 0$, let \[\WC(\ell, r) = \{(j_1, \ldots, j_\ell) \in \N^\ell \, | \, j_1 + \cdots + j_\ell = r\}\] denote the set of all weak compositions of $r$ into $\ell$ parts, and let \[\B(\ell, r) = \{(j_1, \ldots, j_\ell) \in \{0, 1\}^\ell \, | \, j_1 + \cdots + j_\ell = r\}\] denote the set of all binary vectors of weight $r$ and length $\ell$. To prove \eqref{eq:main}, we will use the following new formula for the plethysm adjoints $h_\lambda[h_r^\perp]$ and $e_\lambda[h_r^\perp]$. We will only use part \labelcref{item:e-h-perp} of \cref{thm:he-h-perp} in what follows, but we include both parts because they may be of some independent interest.

\begin{theorem}\label{thm:he-h-perp}
Let $\ell, r \geq 0$, and let $\lambda \in \N^\ell$.
\begin{enumerate}[label=(\alph*)]
\item \label{item:h-h-perp} We have \[h_\lambda[h_r^\perp] = \sum_{M} \prod_{j \in \WC(\ell, r)} h_{M(j)},\] where the sum is over all functions $M\colon \WC(\ell, r) \to \N$ such that \[\sum_{j \in \WC(\ell, r)} M(j) j = \lambda.\]
\item \label{item:e-h-perp} We have \[e_\lambda[h_r^\perp] = \sum_{M} \prod_{j \in \B(\ell, r)}\begin{cases}
h_{M(j)} & \mbox{if $r$ is even} \\
e_{M(j)} & \mbox{if $r$ is odd}
\end{cases},\] where the sum is over all functions $M \colon \B(\ell, r) \to \N$ such that \[\sum_{j \in \B(\ell, r)} M(j) j = \lambda.\]
\end{enumerate}
\end{theorem}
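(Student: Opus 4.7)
The strategy is to package the $h_\lambda$ (respectively $e_\lambda$) as coefficients of a generating function in auxiliary variables $t_1, \ldots, t_\ell$, apply $\bullet[h_r^\perp]$ coefficient-wise, close the resulting expression by two invocations of Cauchy's identity, and extract the coefficient of $t^\lambda = t_1^{\lambda_1} \cdots t_\ell^{\lambda_\ell}$.

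For part \labelcref{item:h-h-perp}, I would start from $h_\lambda = [t^\lambda]\prod_{i=1}^\ell H(t_i)$. Combining the defining adjoint property of $\bullet[h_r^\perp]$ with the reproducing kernel formula $\langle \prod_i H(t_i), F(x)\rangle_x = F(t_1, \ldots, t_\ell)$ (valid for any $F \in \Lambda$ by Cauchy's identity), I obtain
\[\left(\prod_i H(t_i)\right)[h_r^\perp] = \sum_\mu s_\mu[h_r](t_1, \ldots, t_\ell)\, s_\mu(x).\]
The monomial substitution rule (\cref{lemma:monomialsubstitution}) identifies $s_\mu[h_r](t_1, \ldots, t_\ell)$ as $s_\mu$ evaluated at the monomials $t^\alpha$ for $\alpha \in \WC(\ell, r)$, and a second application of Cauchy's identity collapses the sum over $\mu$ into the closed product $\prod_{\alpha \in \WC(\ell,r)} H(t^\alpha)$. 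Expanding each factor $H(t^\alpha) = \sum_n h_n t^{n\alpha}$ and extracting $[t^\lambda]$ reads off exactly the claimed formula.

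For part \labelcref{item:e-h-perp}, I would repeat the argument with $\prod_i E(t_i)$ in place of $\prod_i H(t_i)$. The dual Cauchy identity gives the kernel $\langle \prod_i E(t_i), F(x)\rangle_x = \omega(F)(t_1, \ldots, t_\ell)$, which introduces a factor $\omega(s_\mu[h_r])$ into the analogous formula; simplifying this is the main technical obstacle. My plan is to prove the general identity $\omega(f[g]) = f[\omega g]$ for $g$ homogeneous of even degree, and $\omega(f[g]) = (\omega f)[\omega g]$ for $g$ homogeneous of odd degree: both sides are ring homomorphisms in $f$, so it suffices to check on the $p_k$, which is immediate from $p_k[p_\nu] = p_{k\nu}$ and $\omega p_\nu = (-1)^{|\nu| - \ell(\nu)} p_\nu$. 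Specialization to $g = h_r$ gives $\omega(s_\mu[h_r]) = s_\mu[e_r]$ when $r$ is even and $\omega(s_\mu[h_r]) = s_{\mu^T}[e_r]$ when $r$ is odd. The two parity cases then close by Cauchy and dual Cauchy respectively, yielding $\prod_{\alpha \in \B(\ell,r)} H(t^\alpha)$ and $\prod_{\alpha \in \B(\ell,r)} E(t^\alpha)$; extracting $[t^\lambda]$ in each case completes the proof.
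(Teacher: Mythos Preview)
Your proposal is correct and follows essentially the same generating-function strategy as the paper: package $h_\lambda$ (resp.\ $e_\lambda$) as the $t^\lambda$-coefficient of $\prod_i H(t_i)$ (resp.\ of a companion product), identify the image under $\bullet[h_r^\perp]$ as a closed product indexed by $\WC(\ell,r)$ (resp.\ $\B(\ell,r)$), and extract coefficients. The only cosmetic difference is in part~\labelcref{item:e-h-perp}: the paper works with $\prod_i H(t_i)^{-1}=\prod_i E(-t_i)$ and invokes the plethystic negation rule $h_r[-x]=(-1)^r e_r[x]$ (\cref{lemma:negation}) together with \cref{lemma:h-pleth}, whereas you work directly with $\prod_i E(t_i)$, the dual Cauchy kernel, and the $\omega$--plethysm commutation identities; the two computations are equivalent reformulations of one another.
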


\begin{proof}
\statement{(a)} Let $t_1, \ldots, t_\ell$ be variables. Observe that for all $f \in \Lambda$, we have 
\begin{align}
\langle f, (H(t_1) \cdots H(t_\ell))[h_r^\perp]\rangle &= \langle f[h_r], H(t_1) \cdots H(t_\ell) \rangle\label{eq:prod-wc-1}\\
&= (f[h_r])[t_1 + \cdots + t_\ell]\label{eq:prod-wc-2}\\
&= f[h_r[t_1 + \cdots + t_\ell]]\label{eq:prod-wc-3}\\
&= f\left[\sum_{j \in \WC(\ell, r)}t_1^{j_1} \cdots t_\ell^{j_\ell}\right]\label{eq:prod-wc-4}\\
&= \left\langle f, \prod_{j \in \WC(\ell, r)} H(t_1^{j_1} \cdots t_\ell^{j_\ell}) \right\rangle\label{eq:prod-wc-5}.
\end{align}
For \eqref{eq:prod-wc-1}, we used the definition of plethysm adjoints. For \eqref{eq:prod-wc-2}, we used \cref{lemma:h-pleth}. For \eqref{eq:prod-wc-3}, we used the associativity of plethysm (\cref{lemma:associativity}). For \eqref{eq:prod-wc-4}, we used the monomial substitution rule (\cref{lemma:monomialsubstitution}). For \eqref{eq:prod-wc-5}, we used \cref{lemma:h-pleth} again.

It follows that
\begin{equation*}(H(t_1) \cdots H(t_\ell))[h_r^\perp] = \prod_{j \in \WC(\ell, r)} H(t_1^{j_1} \cdots t_\ell^{j_\ell}).\end{equation*}
Taking the coefficient of $t_1^{\lambda_1} \cdots t_\ell^{\lambda_\ell}$ on both sides yields the desired result. 

\statement{(b)} Let $t_1, \ldots, t_\ell$ be variables. Observe that for all $f \in \Lambda$, we have
\begin{align}
\langle f, (H(t_1)^{-1} \cdots H(t_\ell)^{-1})[h_r^\perp]\rangle  &= \langle f[h_r], H(t_1)^{-1} \cdots H(t_\ell)^{-1} \rangle\label{eq:prod-b-1}\\
&= (f[h_r])[-(t_1 + \cdots + t_\ell)]\label{eq:prod-b-2}\\
&= f[h_r[-(t_1 + \cdots + t_\ell)]]\label{eq:prod-b-3}\\
&= f\left[(-1)^r e_r[t_1 + \cdots + t_\ell]\right]\label{eq:prod-b-4}\\
&= f\left[(-1)^r \sum_{j \in \B(\ell, r)} t_1^{j_1} \cdots t_\ell^{j_\ell}\right] \label{eq:prod-b-5}\\
&= \left\langle f, \prod_{j \in \B(\ell, r)} H(t_1^{j_1} \cdots t_\ell^{j_\ell})^{(-1)^r} \right\rangle\label{eq:prod-b-6}.
\end{align}
For \eqref{eq:prod-b-1}, we used the definition of plethysm adjoints. For \eqref{eq:prod-b-2}, we used \cref{lemma:h-pleth}. For \eqref{eq:prod-b-3}, we used the associativity of plethysm (\cref{lemma:associativity}). For \eqref{eq:prod-b-4}, we used the negation rule (\cref{lemma:negation}). For \eqref{eq:prod-b-5}, we used the monomial substitution rule (\cref{lemma:monomialsubstitution}). For \eqref{eq:prod-b-6}, we used \cref{lemma:h-pleth} again.

It follows that
\begin{equation}\label{eq:prod-b}
(H(t_1)^{-1} \cdots H(t_\ell)^{-1})[h_r^\perp] = \prod_{j \in \B(\ell, r)} H(t_1^{j_1} \cdots t_\ell^{j_\ell})^{(-1)^r}.
\end{equation}
Using the equation $E(-t) = (H(t))^{-1}$, the left-hand side of \eqref{eq:prod-b} simplifies to $(E(-t_1) \cdots E(-t_\ell))[h_r^\perp]$ and the right-hand side simplifies to 
\[\begin{cases}
\prod_{j \in \B(\ell, r)} H(t_1^{j_1} \cdots t_\ell^{j_\ell}) & \mbox{if $r$ is even} \\
\prod_{j \in \B(\ell, r)} E(-t_1^{j_1} \cdots t_\ell^{j_\ell}) & \mbox{if $r$ is odd}
\end{cases}.\] Taking the coefficient of $(-1)^{|\lambda|}t_1^{\lambda_1} \cdots t_\ell^{\lambda_\ell}$ on both sides yields the desired result. 
\end{proof}

\subsection{Specializing to partitions with at most \texorpdfstring{$r + 1$}{r + 1} columns}
In this subsection, we will prove \cref{thm:plethysm-hr} and \cref{cor:plethysm}. First, consider the following special case of \cref{thm:he-h-perp}\labelcref{item:e-h-perp}.
\begin{lemma}\label{lemma:e-lambda-hr}
Let $r \geq 1$, and let $\lambda \in \Z^{r + 1}$. Then,
\[e_\lambda[h_r^\perp] = \begin{cases}
\prod_{i = 1}^{r + 1} h_{|\lambda|/r - \lambda_i} & \mbox{if $r$ is even} \\
\prod_{i = 1}^{r + 1} e_{|\lambda|/r - \lambda_i} & \mbox{if $r$ is odd}\end{cases}.\]
Recall that we use the convention that $h_m = e_m = 0$ for all $m \in \mathbb{Q} \setminus \N$.
\end{lemma}
\begin{proof}
Apply \cref{thm:he-h-perp}\labelcref{item:e-h-perp} with $\ell = r + 1$. Note that $\B(r + 1, r) = \{j_1, \ldots, j_{r + 1}\}$, where $j_i \in \N^{r + 1}$ denotes the vector whose $i$th component is $0$ and whose other components are $1$. The condition \[\sum_{j \in \B(r + 1, r)} M(j) j = \lambda\] appearing in \cref{thm:he-h-perp}\labelcref{item:e-h-perp} is equivalent to the statement that
\[(M(j_1) + \cdots + M(j_{r+1})) - M(j_i) = \lambda_i\] for all $i$.
This system of equations only has one solution; namely, \[M(j_i) = |\lambda|/r - \lambda_i.\] The lemma follows.
\end{proof}
\begin{proof}[Proof of \cref{thm:plethysm-hr}]
Our strategy will be to compute $s_\lambda[h_r^\perp]$. By the second Jacobi--Trudi identity \eqref{eq:second-jacobi-trudi}, we have
\begin{align*}
s_\lambda &= \det(e_{\lambda^T_i - i + j})_{i, j = 1}^{r + 1} \\
&= \sum_{\pi \in \Sym_{r + 1}} (-1)^{\pi} e_{\lambda^T - \Delta + \pi},
\end{align*}
where $\Delta = (1, 2, 3, \ldots, r + 1)$ is the reverse staircase shape, and we consider $\pi$ as an element of $\Z^{r + 1}$ via one-line notation.

Therefore, by \cref{lemma:e-lambda-hr}, we have
\begin{align*}
s_\lambda[h_r^\perp] &= \sum_{\pi \in \Sym_{r + 1}} (-1)^{\pi} e_{\lambda^T - \Delta + \pi} [h_r^\perp]\\
&= \sum_{\pi \in \Sym_{r + 1}} (-1)^{\pi} \begin{cases}
\prod_{i = 1}^{r + 1} h_{|\lambda - \Delta + \pi|/r - (\lambda_i - i + \pi(i))} & \mbox{if $r$ is even} \\
\prod_{i = 1}^{r + 1} e_{|\lambda - \Delta + \pi|/r - (\lambda_i - i + \pi(i))} & \mbox{if $r$ is odd}
\end{cases} \\
&= \sum_{\pi \in \Sym_{r + 1}} (-1)^{\pi} \begin{cases}
\prod_{i = 1}^{r + 1} h_{\left(|\lambda|/r - \lambda_i\right) + i - \pi(i)} & \mbox{if $r$ is even} \\
\prod_{i = 1}^{r + 1} e_{\left(|\lambda|/r - \lambda_i\right) + i - \pi(i)} & \mbox{if $r$ is odd}
\end{cases}.
\end{align*}
Let us now simultaneously perform the changes of variables $i \mapsto r + 2 - i$ and $\pi \mapsto \pi^{rc}$ in the above product and sum respectively, where $\pi^{rc}$ is the reverse complement of $\pi$, given by $\pi^{rc}(i) = (r + 2) - \pi(r + 2 - i)$ for $1 \leq i \leq r + 1$. This yields
\begin{align}
s_\lambda[h_r^\perp] &= \sum_{\pi \in \Sym_{r + 1}} (-1)^{\pi} \begin{cases}
\prod_{i = 1}^{r + 1} h_{\left(|\lambda|/r - \lambda_{r + 2 - i}\right) - i + \pi(i)} & \mbox{if $r$ is even} \\
\prod_{i = 1}^{r + 1} e_{\left(|\lambda|/r - \lambda_{r + 2 - i}\right) - i + \pi(i)} & \mbox{if $r$ is odd}
\end{cases} \nonumber\\
&= \begin{cases}
\det\left(h_{\left(|\lambda|/r - \lambda_{r + 2 - i}\right) - i + j}\right)_{i, j = 1}^{r + 1} & \mbox{if $r$ is even} \\
\det\left(e_{\left(|\lambda|/r - \lambda_{r + 2 - i}\right) - i + j}\right)_{i, j = 1}^{r + 1} & \mbox{if $r$ is odd}
\end{cases}\label{eq:det-complement}
\end{align}

If $|\lambda| / r - \lambda_1 \not \in \N$, then the right-hand side of \eqref{eq:det-complement} is the determinant of a matrix whose last row is $0$, so $\langle s_\lambda, s_\mu[h_r] \rangle = \langle s_\lambda[h_r^\perp], s_\mu \rangle = 0$. It is clear that if $|\lambda| / r - \lambda_1 \not \in \N$, then the hypotheses of \cref{thm:plethysm-hr}\labelcref{item:plethysm-h-even} and the hypotheses of \cref{thm:plethysm-hr}\labelcref{item:plethysm-h-odd} are both impossible, so the theorem is proved.

Hence, we may assume that $|\lambda| / r - \lambda_1 \in \N$, which implies that \[\left(|\lambda|/r - \lambda_{r + 1}, \ldots, |\lambda|/r - \lambda_{1}\right)\] is a partition with zeroes appended to the end. In this case, we may simplify the right-hand side of \eqref{eq:det-complement} using the Jacobi--Trudi identities (\cref{thm:jacobi-trudi}) to obtain
\[s_{\lambda}[h_r^\perp] = \begin{cases}
s_{\left(|\lambda|/r - \lambda_{r + 1}, \ldots, |\lambda|/r - \lambda_{1}\right)} & \mbox{if $r$ is even} \\
s_{\left(|\lambda|/r - \lambda_{r + 1}, \ldots, |\lambda|/r - \lambda_{1}\right)^T} & \mbox{if $r$ is odd}
\end{cases}.\]
We conclude that if $r$ is even, then \[\langle s_\lambda, s_\mu[h_r] \rangle = \begin{cases}
1 & \mbox{if $\mu = \left(|\lambda|/r - \lambda_{r + 1}, \ldots, |\lambda|/r - \lambda_{1}\right)$} \\
0 & \mbox{otherwise}
\end{cases}\] and that if $r$ is odd, then \[\langle s_\lambda, s_\mu[h_r] \rangle = \begin{cases}
1 & \mbox{if $\mu = \left(|\lambda|/r - \lambda_{r + 1}, \ldots, |\lambda|/r - \lambda_{1}\right)^T$} \\
0 & \mbox{otherwise}
\end{cases}.\] This is easily seen to be equivalent to the theorem. 
\end{proof}
\begin{proof}[Proof of \cref{cor:plethysm}]
The first two equations follow from the basic properties of plethysm, and the other three follow from \cref{thm:plethysm-hr}.
\end{proof}
In what follows, we will primarily use the following form of \cref{cor:plethysm}, which is the result of using \eqref{eq:f-g-perp}.
\begin{coro}\label{thm:s-lambda-h-perp}
Let $\lambda$ be a partition with $\lambda_1 \leq 3$. For all $r \geq 4$, we have the following.
\begin{align}
\label{eq:s-h0-perp} s_\lambda[h_0^\perp] &= \begin{cases}
H & \mbox{if $\lambda = \emptyset$} \\
0 & \mbox{otherwise}
\end{cases}. \\
\label{eq:s-h1-perp} s_\lambda[h_1^\perp] &= s_\lambda. \\
\label{eq:s-h2-perp} s_\lambda[h_2^\perp] &= s_{(\lambda^T_1 + \lambda^T_2 - \lambda^T_3)/2, (\lambda^T_1 - \lambda^T_2 + \lambda^T_3)/2, (-\lambda^T_1 + \lambda^T_2 + \lambda^T_3)/2}. \\
\label{eq:s-h3-perp} s_\lambda[h_3^\perp] &= \begin{cases}
e_{\lambda^T_1} & \mbox{if $\lambda^T_1 = \lambda^T_2 = \lambda^T_3$} \\
0 & \mbox{otherwise}
\end{cases}. \\
\label{eq:s-hr-perp} s_\lambda[h_r^\perp] &= \begin{cases}
1 & \mbox{if $\lambda = \emptyset$} \\
0 & \mbox{otherwise}
\end{cases}.
\end{align}
In \eqref{eq:s-h2-perp}, we use the convention that \[s_{(\lambda^T_1 + \lambda^T_2 - \lambda^T_3)/2, (\lambda^T_1 - \lambda^T_2 + \lambda^T_3)/2, (-\lambda^T_1 + \lambda^T_2 + \lambda^T_3)/2} = 0\] if $(-\lambda^T_1 + \lambda^T_2 + \lambda^T_3)/2 \not \in \N$.
\end{coro}
\subsection{Performing the comultiplication}
In this subsection, we will complete the proof of \cref{thm:main}. Let \[D = \operatorname{span} \{s_\lambda \, | \, \lambda_1 \leq 3\} \subseteq \Lambda.\] Because the Littlewood-Richardson coefficient $c^{\lambda}_{\mu, \nu}$ is zero if $\mu_1 > \lambda_1$ or $\nu_1 > \lambda_1$, we have that $D$ is a \emph{subcoalgebra} of $\Lambda$, meaning that $\Delta(D) \subseteq D \otimes D$. Thus,
\begin{equation}\label{eq:delta-k-d}
\Delta^{(k)}(D) \subseteq D^{\otimes (k + 1)}
\end{equation} for all $k$.

By \eqref{eq:s-hr-perp}, we have $f[h_r^\perp] = \eta(\epsilon(f))$ for all $f \in D$ and all $r \geq 4$. (Recall that $\eta$ and $\epsilon$ are the unit and counit of $\Lambda$, respectively; see \cref{subsec:coalg}.) Therefore, by \cref{prop:plethystic-addition-3}, for all $f \in D$, we have \begin{align}
\nonumber f[H^\perp] &= \lim_{k \to \infty} f[(1 + h_1 + \cdots + h_k)^\perp]\\
\nonumber &= \lim_{k \to \infty} (\nabla^{(k)} \circ (\bullet[1^\perp] \otimes \bullet[h_1^\perp] \otimes \cdots \otimes \bullet[h_k^\perp]) \circ \Delta^{(k)})(f) \\
\nonumber &= \lim_{k \to \infty} (\nabla^{(k)} \circ (\bullet[1^\perp] \otimes \bullet[h_1^\perp] \otimes \bullet[h_2^\perp] \otimes \bullet[h_3^\perp] \otimes (\eta \circ \epsilon)^{\otimes k - 3}) \circ \Delta^{(k)})(f) \\
\nonumber &= (\nabla^{(3)} \circ (\bullet[1^\perp] \otimes \bullet[h_1^\perp] \otimes \bullet[h_2^\perp] \otimes \bullet[h_3^\perp]) \circ \Delta^{(3)})(f) \\
\label{eq:frobenius-eq-mul-comul} &= (\nabla^{(3)} \circ (\bullet[1^\perp] \otimes \bullet[h_3^\perp] \otimes \bullet[h_2^\perp] \otimes \bullet[h_1^\perp]) \circ \Delta^{(3)})(f).
\end{align}
Let us now apply \eqref{eq:frobenius-eq-mul-comul} to the Schur function $f = s_\lambda$, where~$\lambda$ is a partition with $\lambda_1 \leq 3$. By the definition of comultiplication \eqref{eq:comultiplication}, we have
\begin{equation}\label{eq:comul-3-s}\Delta^{(3)}(s_\lambda) = \sum_{\lambda^{(1)}, \lambda^{(2)}, \lambda^{(3)}} s_{\lambda^{(1)}} \otimes s_{\lambda^{(2)} / \lambda^{(1)}} \otimes s_{\lambda^{(3)} / \lambda^{(2)}} \otimes s_{\lambda / \lambda^{(3)}},\end{equation} where the sum is over all partitions $\lambda^{(1)}, \lambda^{(2)}, \lambda^{(3)}$. Therefore,
\begin{align}
    \label{eq:s-sum-3}s_\lambda[H^\perp] &= \sum_{\lambda^{(1)}, \lambda^{(2)}, \lambda^{(3)}} s_{\lambda^{(1)}}[1^\perp] \cdot s_{\lambda^{(2)} / \lambda^{(1)}}[h_3^\perp] \cdot s_{\lambda^{(3)} / \lambda^{(2)}}[h_2^\perp] \cdot s_{\lambda / \lambda^{(3)}}[h_1^\perp] \\
    \label{eq:s-sum-2} &= H \sum_{\lambda^{(2)}, \lambda^{(3)}} s_{\lambda^{(2)}}[h_3^\perp] \cdot s_{\lambda^{(3)} / \lambda^{(2)}}[h_2^\perp] \cdot s_{\lambda / \lambda^{(3)}}[h_1^\perp] \\
    \label{eq:s-sum-1} &= H \sum_{r, \lambda^{(3)}} e_r \cdot s_{\lambda^{(3)} / (r, r, r)^T}[h_2^\perp] \cdot s_{\lambda / \lambda^{(3)}}[h_1^\perp] \\
    \label{eq:s-sum-1'} &= H \sum_{r, \lambda^{(3)}} e_r \cdot s_{\lambda^{(3)} / (r, r, r)^T}[h_2^\perp] \cdot s_{\lambda / \lambda^{(3)}}
\end{align}
where the sum is over all integers $r \geq 0$. For \eqref{eq:s-sum-3}, we used \cref{eq:frobenius-eq-mul-comul,eq:comul-3-s}. For \eqref{eq:s-sum-2}, we used \cref{eq:s-h0-perp}. For \eqref{eq:s-sum-1}, we used \cref{eq:s-h3-perp}. For \eqref{eq:s-sum-1'}, we used \cref{eq:s-h1-perp}.

Finally, we reindex the sum in \cref{eq:s-sum-1'} by taking $\nu = (\lambda^{(3)})^T$, and we apply \cref{eq:s-h2-perp}. This yields
\begin{align*}
s_\lambda[H^\perp] &= H \sum_{r, \nu} e_r \cdot s_{(\nu_1 - r, \nu_2 - r, \nu_3 - r)^T}[h_2^\perp] \cdot s_{\lambda / \nu^T} \\
&= H \sum_{r, \nu} e_r s_{(\nu_1 + \nu_2 - \nu_3 - r)/2, (\nu_1 - \nu_2 + \nu_3 - r)/2, (-\nu_1 + \nu_2 + \nu_3 - r)/2} s_{\lambda / \nu^T},
\end{align*}
proving \eqref{eq:main} and thus \cref{thm:main}.

\section{The combinatorial interpretation}\label{sec:interpretation}
We may now derive a combinatorial interpretation of the restriction coefficient $r_\lambda^\mu$, where $\lambda, \mu$ are partitions with $\lambda_1 \leq 3$.
\begin{coro}
Let $\lambda, \mu$ be partitions with $\lambda_1 \leq 3$. Then, $r_\lambda^\mu$ is the number of tuples $(r, \nu, \lambda^{(1)}, \lambda^{(2)}, \lambda^{(3)}, T^{(1)}, T^{(2)})$, where
\begin{itemize}
\item $r \geq 0$ is an integer;
\item $\nu, \lambda^{(1)}, \lambda^{(2)}, \lambda^{(3)}$ are partitions;
\item $(-\nu_1 + \nu_2 + \nu_3 - r) / 2$ is a nonnegative integer;
\item $T^{(1)}$ is a Littlewood--Richardson tableau of shape $\lambda / \nu^T$ and content $\lambda^{(1)}$;
\item $T^{(2)}$ is a Littlewood--Richardson tableau of shape $\lambda^{(2)} / \lambda^{(1)}$ and content $((\nu_1 + \nu_2 - \nu_3 - r)/2, (\nu_1 - \nu_2 + \nu_3 - r)/2, (-\nu_1 + \nu_2 + \nu_3 - r)/2)$;
\item $\lambda^{(3)} / \lambda^{(2)}$ is a vertical strip with $r$ boxes;
\item $\mu / \lambda^{(3)}$ is a horizontal strip.
\end{itemize}
\end{coro}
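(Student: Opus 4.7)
The approach is to apply \cref{thm:main} and extract the combinatorial interpretation by expanding the right-hand side as a nonnegative linear combination of Schur functions using only the Littlewood--Richardson rule and Pieri's formula (\cref{thm:pieri}), then reading off the coefficient of $s_\mu$ via the Hall inner product.

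Fix an integer $r \geq 0$ and a partition $\nu$ satisfying the summation constraints of \cref{thm:main}, and abbreviate $\alpha = ((\nu_1+\nu_2-\nu_3-r)/2,\,(\nu_1-\nu_2+\nu_3-r)/2,\,(-\nu_1+\nu_2+\nu_3-r)/2)$. The corresponding summand is the four-fold product $H \cdot e_r \cdot s_\alpha \cdot s_{\lambda/\nu^T}$. I expand this product from right to left in four steps. First, $s_{\lambda/\nu^T} = \sum_{\lambda^{(1)}} c^\lambda_{\nu^T,\lambda^{(1)}} s_{\lambda^{(1)}}$, where by the Littlewood--Richardson rule $c^\lambda_{\nu^T,\lambda^{(1)}}$ counts tableaux $T^{(1)}$ of shape $\lambda/\nu^T$ and content $\lambda^{(1)}$. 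Second, $s_\alpha s_{\lambda^{(1)}} = \sum_{\lambda^{(2)}} c^{\lambda^{(2)}}_{\lambda^{(1)},\alpha} s_{\lambda^{(2)}}$, again interpreted via Littlewood--Richardson tableaux $T^{(2)}$ of shape $\lambda^{(2)}/\lambda^{(1)}$ and content $\alpha$. Third, by the second part of Pieri's formula, $e_r s_{\lambda^{(2)}} = \sum_{\lambda^{(3)}} s_{\lambda^{(3)}}$ summed over partitions $\lambda^{(3)}$ with $\lambda^{(3)}/\lambda^{(2)}$ a vertical $r$-strip. Fourth, $H \cdot s_{\lambda^{(3)}} = \sum_{s \geq 0} h_s s_{\lambda^{(3)}} = \sum_\mu s_\mu$, summed over partitions $\mu$ such that $\mu/\lambda^{(3)}$ is a horizontal strip, by the first part of Pieri's formula. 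Taking the Hall inner product with a fixed $s_\mu$ therefore selects precisely the tuples enumerated in the corollary; because every expansion step has nonnegative integer coefficients there is no cancellation, and the total count equals $r_\lambda^\mu$.

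The one point requiring a sanity check is that $\alpha$ is always a genuine partition whenever it appears in the sum. The three numerators $\nu_1+\nu_2-\nu_3-r$, $\nu_1-\nu_2+\nu_3-r$, $-\nu_1+\nu_2+\nu_3-r$ have consecutive differences $2(\nu_2-\nu_3)$ and $2(\nu_1-\nu_2)$, which are nonnegative because $\nu$ is a partition, so $\alpha$ is weakly decreasing. These three numerators also pairwise differ by even quantities, so they share a common parity; the summation constraint that $(-\nu_1+\nu_2+\nu_3-r)/2$ be a nonnegative integer then forces all three numerators to be even nonnegative integers, making every entry of $\alpha$ a nonnegative integer and $s_\alpha$ a genuine Schur function. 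Beyond this bookkeeping the proof is a direct unpacking of the formula in \cref{thm:main}, so I do not anticipate any serious obstacle; the substantive content of the corollary is supplied entirely by \cref{thm:main}.
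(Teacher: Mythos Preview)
Your proposal is correct and follows exactly the approach sketched in the paper: apply \cref{thm:main}, expand the skew Schur function and the product with $s_\alpha$ via the Littlewood--Richardson rule, and expand the multiplications by $e_r$ and $H$ via Pieri's formula. Your additional verification that $\alpha$ is a bona fide partition whenever the summation constraint holds is a useful detail the paper leaves implicit.
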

\begin{proof}
This is the result of starting with \cref{thm:main}, expanding each skew Schur function and product of Schur functions using Littlewood--Richardson coefficients, and expanding each product of an elementary or complete homogeneous symmetric function with a Schur function using Pieri's formula (\cref{thm:pieri}).
\end{proof}
\bibliographystyle{plain}
\bibliography{main}
\end{document}